\makeatletter \@addtoreset{equation}{section} \makeatother
\renewcommand\thetable{\thesection.\@arabic\c@table}
\newtheorem{theorem}{Theorem}[section]
\newtheorem{lemma}[theorem]{Lemma}
\newtheorem{proposition}[theorem]{Proposition}
\theoremstyle{definition}
\newtheorem{definition}[theorem]{Definition}
\newtheorem{ejemplo}[theorem]{Example}
\newtheorem{remark}[theorem]{Remark}
\newcommand{\eps}{\varepsilon}
\newcommand{\LLp}{L^2(\bb T)}
\newcommand{\Lp}{L^1(\bb T)}
\newcommand{\HH}{H^2([0,1])}
\newcommand{\Hp}{H^1(\mathbb T)}
\newcommand{\Ha}{{H^1_a}}
\newcommand{\WW}{W^{2,1}([0,1])}
\newcommand{\WWp}{W^{2,1}(\mathbb T)}
\newcommand{\HWp}{H^1_W(\mathbb T)}
\newcommand{\HHp}{H^2(\mathbb T)}
\newcommand{\COp}{C(\mathbb T)}
\newcommand{\CCp}{C^2(\mathbb T)}
\newcommand{\DW}{\mathcal D_W(\bb T)}
\newcommand{\LW}{\mathcal L_W}
\newcommand{\A}{\mathcal A}
\renewcommand{\Re}{\operatorname{Re}}
\newcommand{\mc}[1]{{\mathcal #1}}
\newcommand{\bb}[1]{{\mathbb #1}}
\newcommand{\p}{\partial}
\newcommand{\pfrac}[2]{\genfrac{}{}{}{1}{#1}{#2}}
\newcommand{\ppfrac}[2]{\genfrac{}{}{}{2}{#1}{#2}}
\newcommand{\dd}{\displaystyle}
\newcommand{\R}{\mathbb R}
\newcommand{\C}{\mathbb C}
\newcommand{\Z}{\mathbb Z}
\newcommand{\F}{\mathcal F} 
\renewcommand{\o}{\overline}
\keywords{Generalized derivatives, parabolic equations, continuous dependence}
\date{}
\begin{document}

\title[Continuous dependence on the derivative]{Continuous dependence on the derivative \\ of generalized heat equations }

\author{Tertuliano Franco}
\address{UFBA\\
 Instituto de Matem\'atica, Campus de Ondina, Av. Adhemar de Barros, S/N. CEP 40170-110\\
Salvador, Brasil}
\curraddr{}
\email{tertu@impa.br}

\author{Juli\'an Haddad}
\address{UFBA\\
 Instituto de Matem\'atica, Campus de Ondina, Av. Adhemar de Barros, S/N. CEP 40170-110\\
Salvador, Brasil}
\curraddr{}
\email{jhaddad@dm.uba.ar}

\subjclass[2010]{35K10,35K20}
\begin{abstract}
We consider here a generalized heat equation $\p_t \rho=\pfrac{d}{dx}\pfrac{d}{dW}\rho$, where $W$ is a finite measure on the one dimensional torus, and $\pfrac{d}{dW}$ is the Radon-Nikodym derivative with respect to $W$.
  Such equation has appeared in different contexts, being related to physical systems and representing a large class of classical and non-classical parabolic equations.
  As a natural assumption on $W$, we require that the Lebesgue measure is absolutely continuous with respect to $W$.     
    The main result here presented consists in proving, for a suitable topology, a continuous dependence of the solution $\rho$ as a function of $W$.

\end{abstract}

\maketitle

\section{Introduction}\label{s1} 
The subject of partial differential equations related to generalized derivatives is a somewhat recent and unexploited research theme, with connections with Physics and Probability.
By a generalized derivative we mean, \emph{grosso modo}, a Radon-Nikodym derivative. In this paper, we are concerned with the following partial differential equation
\begin{equation}\label{W_PDE}
\begin{cases}
 \p_t \rho=\pfrac{d}{dx}\pfrac{d}{dW}\rho & \textrm{in \;} (0,\infty)\times \bb T\\
\rho(\cdot, 0) = h(\cdot)& \hbox{in \;} \bb T,
\end{cases}
\end{equation}
where $\bb T = \bb R / \bb Z$ is the one dimensional torus and $W:\R \to \R$  is a right continuous and periodically increasing function in the sense that $W(x+1) - W(x) = 1$ for every $x \in \R$. Or else, $W$ can be understood as the distribution function of a probability measure $\mu$ on the torus $\bb T$.

A function $f$ for which $\frac {df}{dW}$ is well defined and differentiable may have jump discontinuities at the discontinuity points of $W$, namely the points with positive $\mu$-measure.
In the Section 2.7 of \cite{fl}, it was proven the existence of a unique weak solution belonging to the  space $L^2([0,T], \HWp)$ for the equation \eqref{W_PDE}, where $\HWp$ is a suitable Sobolev-type space which admits discontinuous functions.

We restrict ourselves to the class of measures $\mu$ for which the Lebesgue measure is absolutely continuous with respect to $\mu$.
Our main result is a continuous dependence of the unique solution $\rho$ of \eqref{W_PDE} with respect to $W$.
By means of a sequence of transformations, we not only solve the problem of continuity but we actually give an explicit construction of the weak solution.

The subject of dynamics related to generalized derivatives has connections with different areas.  For instance,  the book \cite{m} studies one-dimensional Markov processes whose generators involve Radon-Nikodym derivatives. In fractal analysis we cite \cite{U}. 
Related to  Krein-Fellers operators, see \cite{f,lo1,lo2}.
Partial differential equations related to this operator $\pfrac{d}{dx}\pfrac{d}{dW}$ naturally come out in hydrodynamic limit and fluctuations of interacting particle systems in non-homogeneous medium, see \cite{fjl, fsv, fl}.

The equation \eqref{W_PDE} is, for some cases of the measure $\mu$, in correspondence with  classical
PDE's. Clearly, if $\mu$ is the Lebesgue measure, the PDE \eqref{W_PDE} is  equivalent to the classical heat equation in the one dimensional torus.  

Additionally, in the case where $\mu=\mc L+b\delta_0$, where $\mc L$  is the Lebesgue measure, and $\delta_0$ is the Dirac delta measure at zero, the PDE \eqref{W_PDE} is equivalent to the following heat equation with Robin's boundary conditions:
\begin{equation}\label{Robin}
\begin{cases}
 \p_t \rho(t,x) =\p_{xx}\rho(t,x), & \textrm{ for } t>0, x\in (0,1)\\
 b\big(\rho(t,1)-\rho(t,0)\big)=\p_x \rho(t,0)=\p_x \rho(t,1),& \textrm{ for } t>0,\\
\rho(0,x) = h(x), & \textrm{ for } x\in [0,1]. 
\end{cases}
\end{equation}
This equivalence was showed in \cite{fgn2}. Notice that the boundary conditions above represent the Fourier's Law: the rate of heat transfer across the interface between two media is proportional to the difference of temperature in each medium. 
In this case, the rate is given by the partial derivatives $\p_x \rho(t,0)=\p_x \rho(t,1)$ and the difference of temperature is given by $\rho(t,1)-\rho(t,0)$.

In \cite{fgn2}, it was also described the behaviour of the solution $\rho=\rho^b$ of the equation above as a function of the parameter $b$. It is proven that when $b\to \infty$, the function $\rho^b$ converges to the solution of the heat equation with Newmann's boundary conditions. When $b\to 0$, the convergence is towards the solution of the heat equation with periodic boundary conditions.
Our main theorem covers this last case with much more generality.

 The outline of the paper is the following. 
 In Section \ref{s2} we present heuristics on how one can deduce an equivalent equation for the PDE \eqref{W_PDE} and the proof's scheme about continuous dependence of solutions with respect to $W$.  
 In Section \ref{s3} precise definitions and statements are given.  
 In Section \ref{s4} we transform the equation \eqref{W_PDE} into a classical PDE with continuous weak solutions.
 In Section \ref{s5} we deal with the continuous dependence of the equivalent version of \eqref{W_PDE} by means of a careful analysis on its Fourier transform.
Some auxiliary results are left to the Appendix.

\section{Some interpretations and proof's scheme}\label{s2}
In this section we informally discuss the subject of this paper and the proof's general idea.
All arguments ahead are of heuristic nature. Precise definitions and statements will be presented in the next section.

Is well known that the heat equation may be derived from Fourier's Law, which states that the heat transfer $q$  is proportional to the negative gradient of temperature $\rho$, or else,
\begin{equation}
\label{fourierlaw}
q = -k \rho_x\;.
\end{equation}
An argument about conservation of energy leads to
\begin{equation}\label{capacity}
c \rho_t = -q_x = (k \rho_x)_x\;,
\end{equation}
where $c$ and $k$ are functions of the position $x$.
In physical nomenclature, $c$ is the heat capacity and $k$ is the thermal conductivity.
Equation \eqref{W_PDE} models the case where $k$ may be degenerate in the following sense:
assuming for a moment that $W$ is differentiable, equation \eqref{W_PDE} takes the form
\[
	\rho_t = (\ppfrac 1 {W'} \rho_x)_x\;,
\]
so $W'$ is the inverse of $k$ and represents the thermal resistance.
Fourier's Law \eqref{fourierlaw} takes henceforth the form 
\begin{equation}\label{ratio}
W' q = -\rho_x\;.
\end{equation}
When $W$ is not differentiable (possibly not even continuous), we shall interpret this equality with $W'$ and $\rho_x$ as Schwartz distributions, and $q$ a continuous function.
Let us consider the case when $W'(x) = 1 + \delta_{1/2}(x)$, hence $W$ and $\rho$ must have both a jump discontinuity at $x=\pfrac 12$.
From \eqref{ratio}, we see that the ratio between the size of the jumps is the heat transfer at that point.
This agrees with the Robin's boundary conditions in \eqref{Robin} mentioned in the introduction.

Keeping this interpretation in mind, we will reparametrize the interval $[0,1]$ in such a way that the thermal conductivity becomes constant, leading to a PDE with the classical Laplacian operator (with possibly zero heat capacity at some points). Roughly speaking, we are going to ``stretch" the support of the singular part of $\mu$ with respect to the  Lebesgue measure.  For instance, as showed in the Figure \ref{fig1}, the point $\pfrac12$ in the left graphic is transformed into the interval $[1,2]$ in the right graphic.

Being $W$ a strictly increasing function, it has a continuous left inverse $w:[0,1] \to [0,1]$ such that $w(W(y)) = y$ for all $y \in [0,1]$.
As we shall see, a function $h$ with discontinuities at the same points as $W$  may be represented as a composition $h(y) = f(W(y))$ for some continuous function $f$.
Under the change of variables $y = w(x)$, equation \eqref{W_PDE} becomes
\begin{equation}
	\label{a_equation}
	\left\{
		\begin{array}{l}
		a(x) v_t(t,x) = v_{xx}(t,x)\\
		a(x) v(0,x) = a(x) f(x)
		\end{array}
	\right.
\end{equation}
where, according to \eqref{capacity}, the function $a = w'$ plays the role of the heat capacity. The general strategy will be to establish the continuity of the solution $v$ of \eqref{a_equation} with respect to $a$ in a convenient function space, and then to prove that the composition $\rho(t,y) = v(t, W(y))$ is the solution of \eqref{W_PDE}.
This transformation puts together in the same space functions which are discontinuous in distinct sets.
As usual, the topology on the measures will be given by the vague convergence.

\begin{figure}[ht]
\centering
\includegraphics[scale=.75]{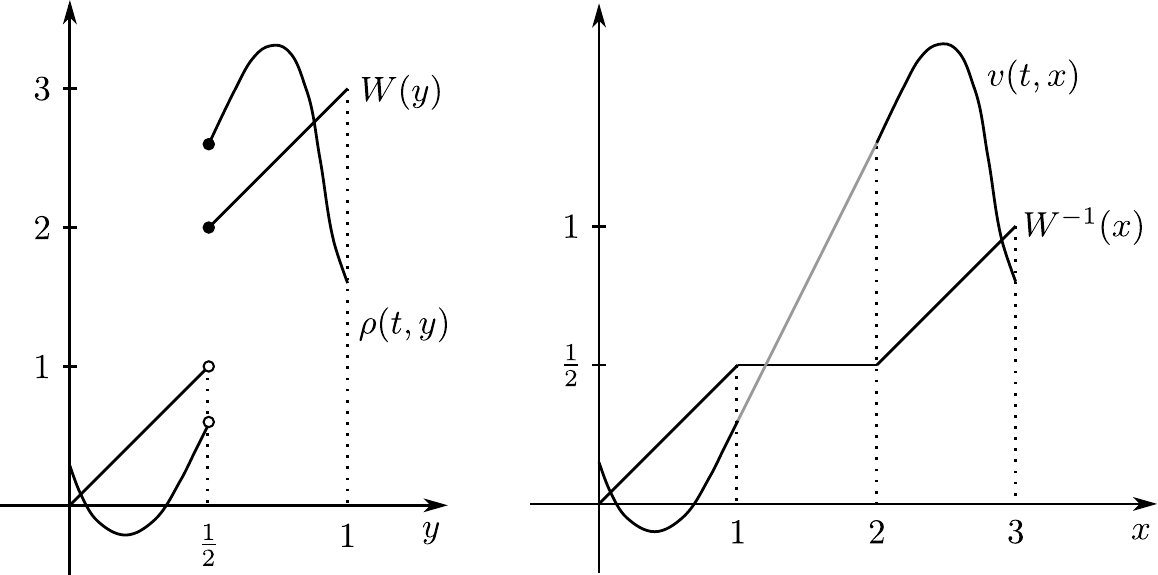} 
\caption{Transformation between equations \eqref{W_PDE} and \eqref{a_equation}. The functions $\rho$ and $v$ are related by $\rho(t,y) = v(t,W(y))$. The grey line is a $C^1$-linear interpolation.}
\label{fig1}
\end{figure}

Observe that if $W$ has a jump discontinuity at a point $x_0$, namely $W(x_0^-) = r < s = W(x_0^+)$, then $a = 0$ in the interval $[r,s]$.
This corresponds to an interval with zero heat capacity and since \eqref{a_equation} reduces to $v_{xx} = 0$, the temperature must be the linear interpolation of the values of $v$ at the end points of the interval, see Figure \ref{fig1}.
This fact may be interpreted as an ``infinite dispersion'' phenomena: any initial temperature at $[r,s]$ is completely dispersed at any positive time, and the initial condition $v(0,x) = f(x)$ will be satisfied only when $a(x) \neq 0$.
This justifies the second equation in \eqref{a_equation}.

In principle, the function $v$ is defined only for $t \in [0, \infty)$.
Extending $v$ as zero in the negative half line, we can take the Fourier transform with respect to time in \eqref{a_equation}, obtaining
\[
	a(x)(i \xi \hat v (\xi, x) - f(x)) = \hat v_{xx}(\xi, x).
\]
The term $f(x)$ appears because $v$ is discontinuous at $t=0$ so $v_t$ has a Dirac delta.
Now this equation is uncoupled in $\xi$ so it may be viewed as a one parameter family of periodic complex ODE's
\begin{equation}
	\label{xi_ODE}
	\left\{
	\begin{array}{l}
		-u''(x) + i \xi a(x) u(x) = a(x) f(x),\\
		u(x+1) = u(x),\\
	\end{array}
	\right.
\end{equation}
with $\xi\in \bb R$ as the parameter.

At this point, the classical theory of ODE's assures the existence of a unique solution that depends continuously on $a$ and $\xi$.
The main difficulty here is to show that we are able to anti-transform $u$ with respect to $\xi$ without loosing continuity. This is the subject of the Section \ref{s5}.

\section{Definitions and statements}\label{s3}

To a probability measure $\mu$ on the torus we can associate a unique right-continuous function $W:\R \to \R$ such that $W(0) = 0$, $W(x+1) - W(x) = 1$ for every $x \in \R$ and such that if $(a,b]$ represents an interval in the torus then
\[\mu((a,b]) = W(b) - W(a).\]
The function $W$ completely characterizes the measure $\mu$.

\begin{definition}
\label{defW}
We denote by $\mc W$ the set of functions $W:\R \to \R$ as above 
associated to the probability measures $\mu$ on the torus $\bb T$ such that the Lebesgue measure is absolutely continuous with respect to $\mu$.
\end{definition}

It follows that $W$ is strictly increasing.
This restriction for the set $\mc W$ is stronger than the one assumed in \cite{fl} namely, that $\mu(I) > 0$ for every open interval $I \subseteq \bb T$.
See Remark \ref{weaker_condition} in the appendix.

\begin{definition}
	We say that a sequence $W_n \in \mc W$ converges vaguely to $W \in \mc W$ if
	\[\int \phi d W_n \to \int \phi d W\]
	for every function $\phi \in \COp$.
\end{definition}
It is well known that this convergence is equivalent to the pointwise convergence of $W_n$ to $W$ in the continuity points of $W$, see for instance \cite{R}.

We use the notation $\langle \cdot, \cdot \rangle_{L^2}$ for the usual inner product in $\LLp$. Also $\HHp, \WWp$ will stand for the usual periodic-Sobolev spaces and $C^\alpha(\bb T)$ for the space of periodic $\alpha$-H\"older continuous functions.
Now we present the definitions concerning the generalized derivative as in \cite{fjl} and \cite{fl}.

\subsection{The generalized derivative}

For a function $f:\bb T \to\bb R$, we define $\frac{d}{dW}$ as follows:
\begin{equation*}
\frac{d f}{dW} (x) = \lim_{\eps\rightarrow 0} \frac{f(x+\eps)
-f(x)}{W(x+\eps) -W(x)},
\end{equation*}
if the above limit exists and is finite. 

\begin{definition}
\label{defDW}
Denote by $\DW$ the set of functions $f$ such that

\begin{equation}
\label{f17}
f(x) \;=\; b \;+\; c W(x)\; +\; \int_{(0,x]} \int_0^y g(z)
\, dz \; dW(y)
\end{equation}
for some function $g$ in $\LLp$ and some $b,c\in \bb R$, with
\begin{equation}
\label{f14}
c W(1) \;+\; \int_{\bb T} \int_0^y g(z) \, dz \; dW(y)  \;=\;0\;, \quad
\int_{\bb T} g(z) \, dz \;=\;0 \;.
\end{equation}
One can check that the function $g$, as well as the constants $b$,$c$, are unique.
The first requirement corresponds to the boundary condition
$f(1)=f(0)$ and the second one to the boundary condition $(df/dW) (1)
= (df/dW) (0)$.

Define the operator $\LW : \DW \to \LLp$ by
\begin{equation*}
\LW f \;=\; \frac{d}{dx} \frac{d}{dW}f \;=\;
\frac{d}{dx}\left(\frac{df}{dW}\right).
\end{equation*}
It is easy to see that $\LW f = g$ a.e in the notation of \eqref{f17}.

\end{definition}

\begin{definition}
\label{weak_W_PDE}
We say that a measurable bounded function $\rho: \R_+ \times \bb T \to \R$ is a weak solution of \eqref{W_PDE} if for all functions $\psi \in \DW$ and every $T>0$,
\[
	\langle \rho(T), \psi \rangle_{L^2} - \langle h, \psi \rangle_{L^2} = \int_0^T \langle \rho(t), \LW \psi \rangle_{L^2} dt.
\]
Here $\rho(T)$ denotes the function $\rho(T, \cdot)$.
\end{definition}

\subsection{Statements}
We are in position to state our main results:

\begin{proposition}
	\label{DW_parametrization_intro}
	Let $h \in \DW$. Then there exists $f \in \HHp$ such that $h(y) = f(W(y))$.
\end{proposition}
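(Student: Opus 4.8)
The plan is to read off the decomposition \eqref{f17} of $h \in \DW$ and to show that each of the three terms, after composing with the left inverse $w$ of $W$, becomes a piece of an $H^2$ function on the torus. Write $h(y) = b + cW(y) + \int_{(0,y]}\int_0^z g(r)\,dr\,dW(z)$ with $g \in \LLp$ satisfying the two compatibility conditions in \eqref{f14}. The natural candidate is
\[
	f(x) \;=\; b \;+\; cx \;+\; \int_0^x \int_0^z g(w(r))\,dr\,dz,
\]
and the first task is to verify that $f(W(y)) = h(y)$. This is essentially a change-of-variables computation: the outer integral $\int_{(0,y]} \cdots \, dW(z)$ against the measure $\mu$ is transported by $x = W(z)$ to an ordinary Lebesgue integral $\int_0^{W(y)} \cdots \, dx$ on the image interval, and on that interval $w$ is the genuine inverse of $W$ up to the jump intervals of $W$, on which the integrand is affine and hence unaffected by the precise choice of $w$. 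The identity $w(W(y)) = y$ handles the inner integral.

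Next I would check that this $f$ lies in $\HHp$. Periodicity of $f$ as a function on $\bb T$ (i.e.\ $f(x+1) = f(x)$, or rather that $f$ descends to the torus with matching endpoint data up to the period) follows from the two conditions in \eqref{f14}: the second, $\int_{\bb T} g\,dz = 0$, after the change of variables becomes $\int_0^1 g(w(r))\,dr = 0$, which forces $f'$ to be $1$-periodic; the first condition then forces the correct relation at the endpoints so that $f$ itself is well defined on $\bb T$. For the regularity: $f' (x) = c + \int_0^x g(w(r))\,dr$ is absolutely continuous with $f''(x) = g(w(x))$ a.e., so it suffices that $g \circ w \in \LLp$. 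Here the hypothesis defining $\mc W$ — that Lebesgue measure is absolutely continuous with respect to $\mu$ — enters: it guarantees that $w$ is Lipschitz (indeed $w$ pushes Lebesgue measure forward to something absolutely continuous, with bounded Radon–Nikodym derivative $w' = a$), so $\|g\circ w\|_{L^2}^2 = \int_0^1 g(w(r))^2\,dr \le \|a\|_\infty \int g^2 \, d\text{(image)} $ is controlled by $\|g\|_{L^2}$. Thus $f, f', f'' \in \LLp$ and $f \in \HHp$.

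The main obstacle, I expect, is being careful about the behaviour on the jump intervals of $W$ and about the exact meaning of the ``left inverse'' $w$. On an interval $[r,s] = [W(x_0^-), W(x_0^+)]$ collapsed from a single atom of $\mu$, the function $w$ is constant equal to $x_0$, so $g \circ w$ is constant there and $f$ is affine there — consistent with the heuristics of Section \ref{s2}. One must make sure that the change of variables $\int_{(0,y]} F(z)\,dW(z) = \int_0^{W(y)} F(w(x))\,dx$ is valid for $F(z) = \int_0^z g(w(r))\,dr$ even in the presence of atoms; the cleanest route is to note that both sides are right-continuous in $y$, agree at continuity points of $W$ by the standard distribution-function change of variables, and that $F \circ w$ is continuous (it is an integral), so the jumps match automatically. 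Once this bookkeeping is done, uniqueness of the decomposition \eqref{f17} (already asserted in Definition \ref{defDW}) together with the construction gives the desired $f$, completing the proof. \cqd
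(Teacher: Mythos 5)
There is a genuine error in your construction: the candidate $f$ is missing the Jacobian of the inner change of variables, and as written $f(W(y)) \neq h(y)$. Transporting the outer Stieltjes integral by \eqref{change_of_variables} gives
\[
h(y) \;=\; b + cW(y) + \int_0^{W(y)}\Big(\int_0^{w(l)} g(r)\,dr\Big)\,dl\;,
\]
and to rewrite the inner integral as an integral from $0$ to $l$ one must substitute $r = w(s)$, $dr = a(s)\,ds$ with $a=w'$, which produces $\int_0^{l} g(w(s))\,a(s)\,ds$ --- not $\int_0^l g(w(s))\,ds$. The identity $w(W(y))=y$ does not ``handle the inner integral'': the equality $\int_0^{w(l)}g(r)\,dr = \int_0^l g(w(r))\,dr$ that your $f$ would require fails already for $g$ constant on a subinterval, unless $w$ is the identity. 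The correct candidate (this is exactly the paper's proof of Proposition \ref{DW_parametrization}) is
\[
f(x) \;=\; b + cx + \int_0^x\!\!\int_0^l g(w(s))\,a(s)\,ds\,dl\;, \qquad f''(x) = g(w(x))\,a(x)\;.
\]
The factor $a$ is not cosmetic: the relation $f'' = a\cdot\big((\LW h)\circ w\big)$ is precisely the compatibility condition $f''=ag$ on which Lemma \ref{u_falls_faster} and condition (ii) of Theorem \ref{mainthm} rely, and it encodes the fact that $f$ must be \emph{affine} on the intervals where $a$ vanishes (the stretched atoms of $\mu$); your $f''=g\circ w$ has no reason to vanish there.

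A second problem is the regularity step. The hypothesis $\mc L \ll \mu$ does not make $w$ Lipschitz, nor $a=w'$ bounded; it only yields absolute continuity of $w$ (Proposition \ref{w_is_abs_cont}). For instance, if $\mu$ has a density $m>0$ a.e.\ whose essential infimum is $0$, then $\mc L\ll\mu$ while $a = 1/(m\circ w)$ is unbounded. Hence the estimate $\|g\circ w\|_{L^2}^2 \le \|a\|_\infty\|g\|_{L^2}^2$ is not available in general, and the $L^2$ control of $f''$ must be obtained otherwise, e.g.\ from the substitution rule $\int |g\circ w|^2\,a\,dx = \|g\|_{L^2}^2$ combined with the boundedness assumptions on $g=\LW h$ that the paper imposes. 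Your remarks about the jump intervals and about periodicity following from \eqref{f14} are correct and coincide with the paper's argument.
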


\begin{theorem}\label{mainthm} 
Let $W_n$, $W \in \mc W$.
Let $\rho_n$, $\rho$ be the corresponding unique weak solutions of the equations
\begin{equation*}
\begin{cases}
	& \p_t \rho=\pfrac{d}{dx}\pfrac{d}{d{W_n}}\rho\\
& \rho(\cdot, 0) = h_n\\
\end{cases}
\end{equation*}
and \eqref{W_PDE} respectively,
where $h_n \in \mc D_{W_n}$ and $h \in \DW$.
Note that by Proposition \ref{DW_parametrization_intro}, $h_n = f_n(W_n)$ and $h = f(W)$ for some $f_n, f \in \HHp$.
Assume that 
\begin{enumerate}[(i)]
\item $W_n \to W$ vaguely;

\item \label{compatibility} the functions $\mc L_{W_n}h_n$ are uniformly bounded;
\item $f_n \to f$ in $\HHp$.
\end{enumerate}

Then there exist continuous functions $v_n, v: \R_+ \times \bb T \to \R$ such that 
\begin{itemize}

\item the functions $v_n, v$ are the unique weak solutions of equation \eqref{a_equation} with initial conditions $f_n$ and $f$, respectively;

\item $\rho_n(t,x) = v_n(t,W_n(x))$ and $\rho(t,x) = v(t,W(x))$;

\item for every $\eps > 0$, the functions $v_n, v \in C(\bb T,C^{\frac 12 - \eps}(\R_+))$ and $v_n \to v$ in the topology of $C(\bb T,C^{\frac 12 - \eps}(\R_+))$.
\end{itemize}

\end{theorem}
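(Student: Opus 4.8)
The plan is to follow the scheme announced in Section~\ref{s2}: transport \eqref{W_PDE} to the classical equation \eqref{a_equation} by the change of variables, prove continuous dependence there by analysing the one‑parameter family \eqref{xi_ODE} obtained from the time–Fourier transform, and transport back.

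\emph{Step 1 (transport to \eqref{a_equation}).} By Proposition~\ref{DW_parametrization_intro} write $h_n=f_n\circ W_n$ and $h=f\circ W$ with $f_n,f\in\HHp$. Each $W\in\mc W$ is strictly increasing, so it has a continuous left inverse $w:[0,1]\to[0,1]$, and the hypothesis $\mc L\ll\mu$ is exactly what makes $w$ absolutely continuous; put $a:=w'\in L^1([0,1])$, so $a\ge0$ and $\int_0^1 a=1$, and likewise $a_n:=w_n'$. Two remarks reduce the theorem to a statement about \eqref{a_equation}. First, vague convergence $W_n\to W$ --- equivalently, pointwise convergence of $W_n$ to $W$ at the continuity points of $W$ --- forces $w_n\to w$ uniformly on $[0,1]$, hence $a_n\,dx\to a\,dx$ weakly as probability measures on $[0,1]$. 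Second, inserting $x=W(y)$ into the weak formulation of Definition~\ref{weak_W_PDE} and matching a test function $\psi\in\DW$ with $\phi$ through $\psi=\phi\circ W$, using the identity $\int_0^1\phi(x)a(x)\,dx=\int_0^1\phi(W(y))\,dy$, one checks that $v\mapsto\rho(t,y):=v(t,W(y))$ carries a weak solution of \eqref{a_equation} with datum $f$ to a weak solution of \eqref{W_PDE} with datum $h$; uniqueness for \eqref{W_PDE} (Section~2.7 of~\cite{fl}) then gives $\rho=v\circ W$ and $\rho_n=v_n\circ W_n$, so it remains to construct continuous $v_n,v$ solving \eqref{a_equation} with $v_n\to v$ in $C(\bb T,C^{\frac12-\eps}(\R_+))$.

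\emph{Step 2 (the family of ODEs and the definition of $v$).} Differentiating $\int_0^1 a\,v(t,\cdot)\,dx$ in $t$ and using \eqref{a_equation} and periodicity shows this quantity is conserved, so the constant $M:=\int_0^1 af\,dx$ is the steady state; with $v=M+\tilde v$ the function $\tilde v$ solves the same equation with datum $f-M$ and $\int_0^1 a\,\tilde v(t,\cdot)\,dx\equiv0$. Extending $\tilde v$ by $0$ to $t<0$ and taking the distributional Fourier transform in $t$ produces, for each $\xi\in\R$, the periodic ODE \eqref{xi_ODE} with datum $a(f-M)$. For $\xi\ne0$ it is uniquely solvable: the energy identity $\int_0^1|u'|^2+i\xi\int_0^1 a|u|^2=0$ forces a homogeneous solution to be a constant annihilated by $a$, hence $0$; for $\xi=0$ the solvability condition $\int_0^1 a(f-M)\,dx=0$ holds by the choice of $M$, the remaining additive constant being fixed by $\int_0^1 a\,u\,dx=0$. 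By the classical theory of linear ODEs (equivalently, the Fredholm alternative for $-\partial_{xx}+i\xi a$ on $\bb T$) the solution $u_\xi=u_\xi^a$ depends continuously on $(\xi,a)$, uniformly in $x$, and extends continuously across $\xi=0$. One then defines
\[
 v(t,x)\;:=\;M+\frac{1}{2\pi}\int_{\R}e^{it\xi}\,u_\xi(x)\,d\xi,
\]
and $v_n$ by the same formula with $(a_n,f_n,M_n)$, to be checked a posteriori to be the (unique) weak solution of \eqref{a_equation}.

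\emph{Step 3 (estimates on $u_\xi$, Fourier inversion, and the main obstacle).} Everything now rests on $\xi$‑decay of $u_\xi^a$, uniform over the relevant densities. From the energy identities $\int_0^1|u_\xi'|^2=\Re\int_0^1 a(f-M)\overline{u_\xi}\,dx$ and $|\xi|\int_0^1 a|u_\xi|^2\le\int_0^1|a(f-M)|\,|u_\xi|\,dx$, the one‑dimensional Poincar\'e--Sobolev inequality, and the identity $\int_0^1 a\,u_\xi\,dx=(i\xi)^{-1}\int_0^1 a(f-M)\,dx$, one first gets a uniform bound on $\|u_\xi\|_\infty$, and then --- by integrating by parts once or twice in $t$, i.e.\ running the same energy estimates for the time‑differentiated problems --- an expansion $u_\xi(x)=\frac{f(x)-M}{i\xi}\,\chi(\xi)+r_\xi(x)$, with $\chi$ a cutoff vanishing near $\xi=0$ and $r_\xi$ decaying fast in $\xi$. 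Here the assumption that $\mc L_{W_n}h_n$ be uniformly bounded enters decisively: after transport it amounts to the pointwise bound $|f_n''|\le C a_n$, which is exactly what keeps $\partial_t v_n(0,\cdot)=f_n''/a_n$ bounded and hence controls $r_\xi$ uniformly in $n$, while $f_n\to f$ in $\HHp$ supplies the convergence of the data. The explicit term $\frac{f-M}{i\xi}\chi(\xi)$ produces precisely the jump of $v$ at $t=0$: its contribution to the inverse transform differs, for $t>0$, from $\tfrac12(f(x)-M)\operatorname{sgn}(t)$ by a function of $t$ that is smooth (indeed real‑analytic by Paley--Wiener) on all of $\R$ and depends on $x$ only through the continuous scalar $f(x)-M$, hence is harmless and $x$‑continuous on $\R_+$. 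The remainder is handled by the elementary bound $|e^{it\xi}-e^{is\xi}|\le\min(2,|t-s|\,|\xi|)$, split at $|\xi|=|t-s|^{-1}$, which converts the decay of $r_\xi$ into $C^{\frac12-\eps}$ regularity in $t$, uniformly in $x$; and continuous dependence of $u_\xi^{a_n}\to u_\xi^{a}$ locally uniformly in $\xi$, together with an $n$‑uniform integrable dominating function, gives $v_n\to v$ in $C(\bb T,C^{\frac12-\eps}(\R_+))$ by dominated convergence. The main obstacle, as the authors indicate, is to run all of Step~3 \emph{uniformly} over a family $\{a_n\}$ that need only converge weakly and may be unbounded or concentrate: there is then no a priori uniform spectral gap for $\tfrac1a\partial_{xx}$, nor uniform decay of $u_\xi$ as a function of $x$, and the remedy should be to work throughout with the $a\,dx$‑weighted $L^2$ norm of $u_\xi$ --- which the energy identity bounds by $|\xi|^{-1}$ regardless of $a$ --- to recognise this as the quantity that, once transported by $W_n$, governs $\rho_n=v_n\circ W_n$, and to pass to the limit at that level. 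Extracting the degenerate initial condition $a(x)v(0,x)=a(x)f(x)$ on $\{a\ne0\}$ from the inversion formula is the other point requiring care.
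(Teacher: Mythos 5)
Your proposal follows essentially the same route as the paper: transport to \eqref{a_equation} via $w$ and $a=w'$, Fourier transform in time to get \eqref{xi_ODE}, split off an explicit $O(\xi^{-1})$ term carrying the $t=0$ jump so that the remainder satisfies $-r''+i\xi a r=\frac{ag}{i\xi}$ (the paper uses $\frac{f}{1+i\xi}$ in place of your cutoff $\frac{f-M}{i\xi}\chi(\xi)$) and decays like $|\xi|^{-3/2}$ by energy estimates in the $a$-weighted norm, with the compatibility hypothesis entering exactly as you say through $f_n''=a_ng_n$, $g_n$ bounded, and the limit taken by dominated convergence. The only point I would press is your appeal to ``classical ODE continuous dependence'' for $u_\xi^{a_n}\to u_\xi^{a}$: since $a_n\to a$ only weakly in $L^1$, this step is not classical and is carried out in the paper by a compactness argument (uniform $H^1$ bound on $k_n$, precompactness in $C^0$, and identification of the unique accumulation point through the weak formulation), which is consistent with, but not fully supplied by, your closing remarks on weighted norms.
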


In order to illustrate the range of applicability of our theorem, we present some examples.
Denote by $\mc L$ the Lebesgue measure on $\bb T$ and by ${\bf 1}_A$ the indicator function of a set $A$.

\begin{ejemplo}
\label{lebesgueplusdelta}
Consider the measure $\frac 12 \mc L + \frac 12 \delta_{\frac 12}$.
In this case, the function $a$ from equation \eqref{a_equation} is given by $a = {\bf 1}_{[0, \frac 14]} + {\bf 1}_{[\frac 34,1]}$.
We observe that equation \eqref{a_equation} is the classical (periodic) heat equation for $x \in [0, \frac 14] \cup [\frac 34, 1]$ while for $x \in [\frac 14, \frac 34]$ it reduces to $v_{xx} = 0$.
As a consequence, if we assume that $v_x$ is continuous then
\[v_x(t,\pfrac 14) = v_x(t, \pfrac 34) = 2 \big( v(t, \pfrac 34) - v(t, \pfrac 14)\big),\] for all $t > 0$.
The function $\rho(t,x) = v(t, W(x))$ satisfies the Robin's boundary conditions as in \eqref{Robin}, with $b=2$.

If instead we consider $(1-c) \mc L + c \delta_p$ then the Theorem \ref{mainthm} guarantees that the solution varies continuously with respect to $p \in \bb T$ and $c \in [0,1]$.

\end{ejemplo}

\begin{ejemplo}
The Laplacian operator with respect to fractal measures was considered in \cite{U} where its properties of self-similarities are exploited.
Consider $W(x) = \frac 12 x + \frac 12 \mc C(x)$ where $\mc C$ is the usual ternary Cantor ``staircase'' function.
Observe that if $C \subset [0,1]$ is the Cantor set, then $W(C)$ is a cantor-like set of positive measure and $a = {\bf 1}_{W(C)}$.

Consider the usual uniform approximation $\mc C_n \to \mc C$ by piecewise-linear continuous functions, and $W_n(x) = \frac 12 x + \frac 12 \mc C_n(x)$. Theorem \ref{mainthm} is applicable to this situation, although the corresponding solutions of \eqref{W_PDE} and \eqref{a_equation} are hard to describe.
\end{ejemplo}

\begin{ejemplo}
\label{piquito}

Consider the measures $\mu_n = \frac 12 \mc L + \frac 14 \delta_{(\frac 12 -\frac 1n)} + \frac 14 \delta_{(\frac 12 +\frac 1n)}$ whose vague limit is $\mu = \frac 12 \mc L + \frac 12 \delta_{\frac 12}$.
As in Example \ref{lebesgueplusdelta}, the solutions exhibit Robin's boundary conditions at the points $\frac 12 \pm \frac 1n$ and they converge to the solution of Example \ref{lebesgueplusdelta}.
In words, the two boundary conditions overlap in the limit.
\end{ejemplo}
\begin{ejemplo}
\label{piquitomalo}
Regarding equation \eqref{a_equation} in the situation of the previous example, the functions $a_n$ are shown in 
 \ref{fig2}.
Assume that the initial condition $f$ is also as in Figure \ref{fig2}.
As described in the introduction, for fixed $t > 0$ the solution of \eqref{a_equation} is linear in the intervals where $a = 0$, therefore the solution $\rho_n(t, \frac 12)$ do not converge to $\rho(t, \frac 12)$ uniformly in $t$, and the theorem fails.
In this case the convergence is only $L^2$ in time.

This counterexample is not relevant to equation \eqref{W_PDE} since the initial conditions $f(W_n)$ do not converge at $x = \frac 12$, but it suggests there should exist a compatibility condition between $a$ and $f$ in order to have the desired continuity.
This condition is contained in the requirement of Theorem \ref{mainthm} that $f(W_n) \in \DW$ which does not hold in this case.
The reader should compare this example with Proposition \ref{DW_parametrization} in the next section.
\end{ejemplo}

\begin{figure}[ht]
\centering
\includegraphics[scale=0.7]{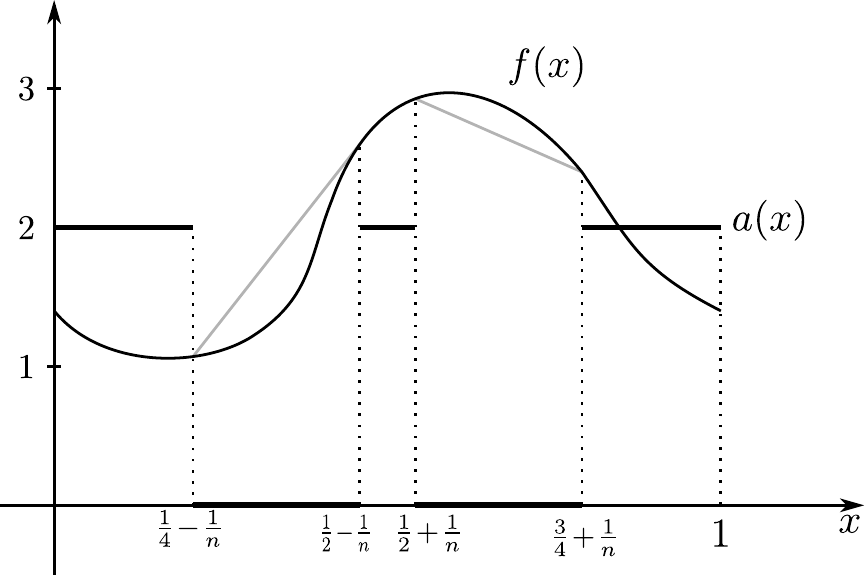} 
\caption{Initial compatibility. The grey segments are linear interpolations where the function $a$ vanishes.}
\label{fig2}
\end{figure}


\section{An equivalent version for the partial equation \eqref{W_PDE}}\label{s4}

A strictly increasing (not necessarily continuous) function $W:[0,1] \to [0,1]$ has a generalized inverse $w:[0,1] \to [0,1]$ defined as 
\begin{equation*}
w(s):=\sup\{r\,;\,W(r)\leq s\}\,.
\end{equation*}
Some properties of the generalized inverse are listed in the Appendix.

If $W \in \mc W$ then $w$ is an absolutely continuous function and thus it is the primitive of a non-negative function $a = w'$, see \ref{w_is_abs_cont} in the Appendix for a proof.

In this section we show that equation \eqref{W_PDE} is equivalent to equation \eqref{a_equation}
in the sense that $v$ and $\rho$ are related by $\rho(t,y) = v(t,W(y))$.
The equation \eqref{a_equation} has to be regarded in the weak sense defined as follows.
\begin{definition}
\label{weak_a_PDE}

Denote $\langle f,g \rangle_a = \dd \int_{\bb T} a(x) f(x) g(x) dx$.

We say that a continuous function $v: \R_+ \times \bb T \to \R$ is a weak solution of \eqref{a_equation} if, for all functions $\phi \in \HHp$,
\[
	\langle v(T), \phi \rangle_a - \langle f, \phi \rangle_a = \int_0^T \langle v(t), \phi'' \rangle_{L^2} dt.
\]
Here $v(T)$ denotes the function $v(T,\cdot)$.
\end{definition}

\subsection{Equivalence of equations}
We first characterize the space $\DW$ as a set of functions composed with $W$.
\begin{proposition}
	\label{DW_parametrization}
	Let $F \in \DW$. Then there exists $G \in \HHp$ such that $F(y) = G(W(y))$ and $ a(x) \LW F( w(x) ) = G''(x)$.
\end{proposition}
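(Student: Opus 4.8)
The plan is to write down $G$ explicitly and then to verify the two claimed identities and the regularity. Using \eqref{f17}, put $g=\LW F\in\LLp$, let $\Phi(y):=\int_0^y g(z)\,dz$ (an absolutely continuous function), and define
\[
	G(t)\;:=\;b+ct+\int_0^t\Phi(w(s))\,ds,
\]
where $b,c$ are the constants attached to $F$ in \eqref{f17} and $w$ is the generalized inverse of $W$. By construction $G(0)=b$, and since $w$ is continuous, $G\in C^1(\bb T)$ with $G'(t)=c+\Phi(w(t))$. On an interval $[W(x_0^-),W(x_0)]$ arising from an atom $x_0$ of $\mu$ one has $w\equiv x_0$, so $G$ is affine there: this is exactly the linear interpolation repairing the jump of $F$ that was anticipated in Section~\ref{s2}.

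First I would verify $F(y)=G(W(y))$. This reduces to the substitution rule for the generalized inverse (one of the properties listed in the appendix), namely $\int_0^{W(y)}\psi(w(s))\,ds=\int_{(0,y]}\psi(u)\,dW(u)$ for continuous $\psi$ — equivalently, $w$ pushes the Lebesgue measure forward to $\mu$ — applied with $\psi=\Phi$, which turns $G(W(y))=b+cW(y)+\int_0^{W(y)}\Phi(w(s))\,ds$ into $b+cW(y)+\int_{(0,y]}\Phi(u)\,dW(u)=F(y)$. Evaluating at $y=1$ and using the first identity in \eqref{f14} gives $G(1)=b=G(0)$, while $G'(1)=c+\Phi(1)=c=G'(0)$ because $\int_{\bb T}g=0$ forces $\Phi(1)=0$; hence $G$ is periodic.

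Next I would treat the regularity of $G$ together with the second identity. Since $\Phi$ is absolutely continuous with $\Phi'=g$ a.e.\ and $w$ is nondecreasing and absolutely continuous (here the standing hypothesis $\mc L\ll\mu$, built into the definition of $\mc W$, enters, through the appendix), the composition $\Phi\circ w$ is absolutely continuous; therefore $G'$ is absolutely continuous with
\[
	G''(t)\;=\;\Phi'(w(t))\,w'(t)\;=\;g(w(t))\,a(t)\qquad\text{a.e.},
\]
which is precisely $a(x)\,\LW F(w(x))=G''(x)$ since $\LW F=g$. It remains to show $G''\in\LLp$, so that $G\in\HHp$: here one changes variables once more, $\int_0^1\psi(w(t))\,w'(t)\,dt=\int_0^1\psi(y)\,dy$, together with the a.e.\ identity $a=\bigl(\tfrac{d\mc L}{d\mu}\bigr)\!\circ w$, to control $\int_0^1 a(t)^2\,g(w(t))^2\,dt$ in terms of $g$.

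The main obstacle is this final $L^2$ (Sobolev) estimate on $G''$: the remaining steps are bookkeeping with the generalized inverse, whereas the substance of the proposition — that composing with $W$ converts the generalized operator $\LW$ into the classical Laplacian at the price of the weight $a$, without loss of regularity — rests entirely on the absolute continuity of $w$, i.e.\ on $\mc L\ll\mu$.
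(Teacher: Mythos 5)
Your construction is essentially the paper's own proof: both start from the representation \eqref{f17}, use the substitution rule for the Lebesgue--Stieltjes integral to write $F = G\circ W$ with $G(x) = b + cx + \int_0^x \Phi(w(s))\,ds$, and identify $G'' = g(w)\,a = a\cdot\LW F(w)$; the only difference is that the paper performs a second change of variables $r = w(s)$ inside the inner integral, where you instead keep $\Phi(w(l))$ and differentiate via the chain rule for absolutely continuous functions. The one step you leave as a sketch --- that $G'' = g(w)\,a$ actually lies in $L^2$ so that $G\in\HHp$ --- is precisely the step the paper dismisses with ``is clearly in $\HH$'', so you are not behind the paper here; note, however, that your proposed reduction via \eqref{change_of_variables_2} gives $\int a(x)^2 g(w(x))^2\,dx = \int a(W(y))\,g(y)^2\,dy$, which is controlled by $\|g\|_2^2$ only if $a$ is essentially bounded, something that $\mc L\ll\mu$ alone does not guarantee --- so this point genuinely deserves more care than either you or the paper gives it.
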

\begin{proof}
	Take $F \in \DW$, which according to Definition \ref{defDW} is represented as
	\[
		F(y) = b + c W(y) + \int_{(0,y]}  \int_0^z g(r) dr \; dW(z).
	\]	
	By the substitution rule \eqref{change_of_variables} applied to the integral with respect to $W$ above,
	
	\[
		F(y) = b + c W(y) + \int_{W(0)}^{W(y)} \int_0^{w(l)} g(r) dr \,dl.
	\]
	Taking the substitution $r=w(s)$, we obtain
	\[
		F(y) = b + c W(y) + \int_{W(0)}^{W(y)} \int_0^l g(w(s))a(s) ds\, dl.
	\]
	Finally $F(y) = G(W(y))$, where
	\[
		G(x) = b + c x + \int_0^x \int_0^l g(w(s))a(s) ds\, dl
	\]
	is clearly in $\HH$ and $G''(x) = g(w(x))a(x)$.
	In order to show that $G$ is periodic it is enough to use property \eqref{f14} and the substitution rule \eqref{change_of_variables_2}.

	For the second statement, the discussion in Definition \ref{defDW} shows that $g = \LW F$.
\end{proof}
 
Keeping this equivalence in mind, we see that the following proposition is immediate.

\begin{proposition}
\label{equations_are_equivalent}
Let $v: \R_+ \times \bb T \to \R$ be a continuous weak solution of \eqref{a_equation}.
Then the function $\rho(t,y) = v(t,W(y))$ is a weak solution of \eqref{W_PDE} as in Definition \ref{weak_W_PDE}.
\end{proposition}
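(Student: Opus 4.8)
The plan is to verify Definition~\ref{weak_W_PDE} for $\rho(t,y)=v(t,W(y))$ by pulling back the weak formulation of \eqref{a_equation} through the change of variables $y=w(x)$. Given a test function $\psi\in\DW$, apply Proposition~\ref{DW_parametrization} to write $\psi(y)=\phi(W(y))$ with $\phi\in\HHp$ and $a(x)\LW\psi(w(x))=\phi''(x)$. The idea is that the map $\psi\mapsto\phi$ is exactly the correspondence that turns the pairing $\langle\cdot,\cdot\rangle_{L^2}$ on the torus into the weighted pairing $\langle\cdot,\cdot\rangle_a$ and turns $\LW$ into the ordinary second derivative.

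First I would establish the key identity $\langle\rho(T),\psi\rangle_{L^2}=\langle v(T),\phi\rangle_a$. Expanding the left side, $\int_{\bb T}v(T,W(y))\,\psi(y)\,dy=\int_{\bb T}v(T,W(y))\,\phi(W(y))\,dy$, and the substitution rule for the generalized inverse (the change-of-variables formulas \eqref{change_of_variables}, \eqref{change_of_variables_2} invoked in the proof of Proposition~\ref{DW_parametrization}, with $dy = a(x)\,dx$ after substituting $y=w(x)$) converts this into $\int_{\bb T}v(T,x)\,\phi(x)\,a(x)\,dx=\langle v(T),\phi\rangle_a$. The same computation applies verbatim to the initial data, giving $\langle h,\psi\rangle_{L^2}=\langle f,\phi\rangle_a$, since $h(y)=f(W(y))$. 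For the right-hand side, I would write $\langle\rho(t),\LW\psi\rangle_{L^2}=\int_{\bb T}v(t,W(y))\,\LW\psi(y)\,dy$; substituting $y=w(x)$ and using $a(x)\LW\psi(w(x))=\phi''(x)$ turns the integrand-times-Jacobian into $v(t,x)\,\LW\psi(w(x))\,a(x)\,dx=v(t,x)\,\phi''(x)\,dx$, so that $\langle\rho(t),\LW\psi\rangle_{L^2}=\langle v(t),\phi''\rangle_{L^2}$. Then I would integrate this identity over $t\in[0,T]$ and invoke the weak formulation of \eqref{a_equation} from Definition~\ref{weak_a_PDE} with the test function $\phi$, obtaining $\langle\rho(T),\psi\rangle_{L^2}-\langle h,\psi\rangle_{L^2}=\langle v(T),\phi\rangle_a-\langle f,\phi\rangle_a=\int_0^T\langle v(t),\phi''\rangle_{L^2}\,dt=\int_0^T\langle\rho(t),\LW\psi\rangle_{L^2}\,dt$, which is exactly the identity in Definition~\ref{weak_W_PDE}. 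Boundedness and measurability of $\rho$ follow from continuity of $v$ and monotonicity of $W$.

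The only genuine subtlety — which is why the word ``immediate'' in the excerpt is justified once Proposition~\ref{DW_parametrization} is in hand — is the careful application of the substitution rule for the (discontinuous, strictly increasing) function $W$ and its generalized inverse $w$: one must check that integrating a function of $W(y)$ against $dy$ over the torus equals integrating against $a(x)\,dx$ after the substitution, handling the null set where $a$ vanishes (the ``stretched'' intervals corresponding to atoms of $\mu$) correctly. This is precisely the content of the change-of-variables formulas quoted in the proof of Proposition~\ref{DW_parametrization} and deferred to the Appendix, so I would simply cite them. The passage of the $t$-integral through the substitution is justified by Fubini, using that $v$ is bounded on $[0,T]\times\bb T$. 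No further estimates are needed; the proposition is a bookkeeping consequence of the correspondence $(\psi,\LW)\leftrightarrow(\phi,\phi'')$ and $(\langle\cdot,\cdot\rangle_{L^2},dy)\leftrightarrow(\langle\cdot,\cdot\rangle_a,a\,dx)$.
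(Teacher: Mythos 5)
Your proposal is correct and follows essentially the same route as the paper: the paper's own proof simply cites Proposition~\ref{DW_parametrization} together with the identity $\langle f,g\rangle_a=\langle f(W(\cdot)),g(W(\cdot))\rangle_{L^2}$ from the substitution rule \eqref{change_of_variables_2}, which is exactly the correspondence $(\psi,\LW\psi)\leftrightarrow(\phi,\phi'')$ and $\langle\cdot,\cdot\rangle_{L^2}\leftrightarrow\langle\cdot,\cdot\rangle_a$ that you spell out term by term. Your version is just a more explicit write-up of the same bookkeeping.
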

\begin{proof}
The statement follows from Proposition \ref{DW_parametrization} and the identity
\[\langle f,g \rangle_a = \langle f(W(.)), g(W(.)) \rangle_{L^2}\;,\]
which is a consequence of the substitution rule \eqref{change_of_variables_2}.
\end{proof}

\subsection{Equivalence of topologies}

In Section \ref{s5} we shall prove the continuity of the solution of \eqref{a_equation} with respect to the function $a = w'$ under the weak-$L^1$ topology.

Since $\int \phi a_n dx = \int \phi d w_n$, the weak-$L^1$ convergence of the sequence $a_n$ is equivalent to the vague convergence of $w_n$. Next, we state a general result relating the convergence of $w_n$ to the convergence of $W_n$.

\begin{proposition}
	Assume that $W$ is a \textit{strictly} increasing function.
	A sequence $W_n$ converges vaguely to $W$ if and only if it converges pointwise in the continuity points of $W$.
	In that case, the sequence of generalized inverses $w_n$ converges pointwise to $w$ and thus, vaguely.
\end{proposition}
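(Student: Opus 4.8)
The first assertion is the classical equivalence, for distribution functions, between vague (=weak) convergence of the associated probability measures and pointwise convergence at the continuity points of the limit; this is already recalled in Section~\ref{s3} (see \cite{R}), so I would simply invoke it. The substance lies in the statement about the generalized inverses. The plan is as follows: fix $s$, and using the periodicity relations $W_n(x+1)=W_n(x)+1$ together with $w_n(s+1)=w_n(s)+1$, reduce to $s\in(0,1)$; then show $w_n(s)\to w(s)=:r_0$, working throughout with the extensions of the $W_n$ to $\R$.

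The key elementary step — and the one place where \emph{strict} monotonicity of $W$ is essential — is the pair of implications
\[
r<r_0 \ \Longrightarrow\ W(r)<s, \qquad r>r_0 \ \Longrightarrow\ W(r)>s .
\]
For the first: since $r<r_0=\sup\{r':W(r')\le s\}$, there is $r'$ with $r<r'\le r_0$ and $W(r')\le s$, whence $W(r)<W(r')\le s$ by strict monotonicity. For the second: $W(r)\le s$ would give $r\le r_0$. Granting this, I would fix $\eps>0$ and pick continuity points $p,q$ of $W$ with $r_0-\eps<p<r_0<q<r_0+\eps$ (possible since $W$ has at most countably many discontinuities); then $W(p)<s<W(q)$ with a strict gap $\delta:=\min\{s-W(p),\,W(q)-s\}>0$. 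Since $W_n\to W$ at continuity points of $W$, for $n$ large one has $W_n(p)<s<W_n(q)$, and monotonicity of $W_n$ then forces $p\le w_n(s)\le q$, i.e. $|w_n(s)-r_0|<\eps$. Letting $\eps\downarrow 0$ gives $w_n(s)\to w(s)$.

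For the closing clause ``and thus, vaguely'': $w$ is continuous (being the inverse of a strictly increasing function it has no jumps), so the pointwise convergence just obtained holds at every continuity point of $w$; one may then either re-apply the first equivalence to the distribution function $w$, or argue directly, since the $w_n$ are nondecreasing and uniformly bounded on $[0,1]$, integration by parts on the torus yields $\int\phi\,dw_n=-\int\phi'\,w_n\,dx\to-\int\phi'\,w\,dx=\int\phi\,dw$ for every $\phi\in\COp$ by dominated convergence.

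I expect the only genuinely delicate points to be (a) extracting the strict inequalities $W(r)<s<W(r')$ off the value $r_0$ — exactly where the hypothesis that $W$ is strictly increasing is used, and where the $\sup$ in the definition of $w$ must be handled with care — and (b) the bookkeeping at the ``seam'' of the torus and near the atoms of $\mu$, both of which are dispatched by passing to the $\R$-extensions and invoking the periodicity identities.
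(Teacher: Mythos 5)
Your proof is correct, but there is nothing in the paper to compare it against: the authors do not prove this proposition, they simply cite \cite[Proposition 0.1, page 5]{R}. Your argument is essentially the standard proof of that cited result, specialized to the strictly increasing case. The core of your write-up is sound: the two implications $r<r_0\Rightarrow W(r)<s$ and $r>r_0\Rightarrow W(r)>s$ are exactly where strict monotonicity enters, the choice of continuity points $p<r_0<q$ is legitimate since a monotone function has at most countably many discontinuities, and the sandwich $p\le w_n(s)\le q$ follows correctly from $W_n(p)<s<W_n(q)$ and monotonicity of $W_n$. What your direct argument buys over merely quoting Resnick is the observation that strictness of $W$ makes $w$ continuous, so the classical conclusion ``$w_n\to w$ at continuity points of $w$'' upgrades to convergence at \emph{every} point, which is the form the paper actually states. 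Two small blemishes in the final clause: the integration by parts on the torus produces a boundary term, $\int_{\bb T}\phi\,dw_n=\phi(0)\bigl(w_n(1)-w_n(0)\bigr)-\int\phi'w_n\,dx=\phi(0)-\int\phi'w_n\,dx$, so the identity as you wrote it is off by the constant $\phi(0)$ (harmless for the convergence, since it is the same for all $n$ and for the limit, but false as stated); and that computation requires $\phi\in C^1$, so for general $\phi\in\COp$ you need a density step. Your first alternative --- re-applying the equivalence between vague and pointwise convergence to the continuous limit $w$ --- avoids both issues and is the cleaner route.
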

The proof of the proposition above can be found in \cite[Proposition 0.1, page 5]{R}.
In consequence, under the hypothesis of Theorem \ref{mainthm}, namely, that $W_n \to W$ vaguely, we have that $a_n \to a$ weakly in $L^1$.
\section{Continuity via Fourier transform}\label{s5}

As mentioned in the Introduction, equation \eqref{a_equation} is in correspondence with the family of complex ODE's \eqref{xi_ODE}.
In this section our aim is to prove that solutions of \eqref{xi_ODE} are bounded by an $L^1(\R)$-function of the parameter $\xi$.
Then, considering $u$ as a function $u(\xi, x)$, this will allow us to take the inverse Fourier transform with respect to $\xi$.

A quick inspection to the easy case $a\equiv 1$ gives some insight on what we should expect.
The periodic solutions of equation
\[-u''(x) + i\xi u(x) = f(x)\]
can be described by taking the Fourier series with respect to $x$. The transformed equation is
\[
	n^2 u_n + i \xi u_n = f_n
\]
and the solution is 
\[
	u(x) = \sum_{n \in \Z} \frac {f_n}{n^2 + i \xi} e^{2\pi i n x}\;.
\]
Of course, $u$ can be thought as a function $u(\xi, x)$ and we need to describe its behaviour as a function of $\xi$.

First we observe that
\[
	\sum_{n \in \Z} \frac {f_n}{n^2 + i \xi} e^{2\pi i n x} = \frac {f_0}{i \xi} + \sum_{n \neq 0} \frac {f_n}{n^2 + i \xi} e^{2\pi i n x}\;,
\]
so we have a singularity at $\xi = 0$. To avoid this problem we require $f_0 = 0$ which guarantees that $u$ is bounded.
Since the equation \eqref{a_equation} is linear with respect to $v$, this requirement do not represent a restriction.
Moreover, the solution $v(t,x)$ converges to its average $f_0$ as $t \to \infty$, thus we need $f_0 = 0$ in order to expect $v$ to be an $L^2$-function of time.

By Cauchy-Schwarz, Parseval, and the inequality $2n^2|\xi|\leq n^ 4 + \xi^ 2$,
\begin{equation}
	\label{s5intro_1}
	|u(x)| \leq \|f\|_2 \sqrt{\sum_{n \neq 0} |n^2 + i \xi|^{-2}} \leq C \|f\|_2 |\xi|^{-\frac 12}\;.
\end{equation}
Thus, if we assume only that $f \in \LLp$, then we can only expect that $|u(x)| \leq C |\xi|^{-\frac 12}$, which is not in $L^2$.
In order to obtain a sufficiently rapid decay with respect to $\xi$ we may impose the condition that $f' \in L^2$, which means $(n f_n)_{n \in \Z} \in \ell^2$ and yields
\begin{equation}
	\label{s5intro_2}
	|u(x)| \leq \|f'\|_2 \sqrt{\sum_{n \neq 0} |n(n^2 + i \xi)|^{-2}} \leq C \|f'\|_2 |\xi|^{-1}\;.
\end{equation}
However, this inequality only guarantees that $u$ is bounded by an $L^2$-function of $\xi$ and does not imply continuity in time of $v = \F^{-1}[u]$.
We observe that this bound can not be improved because the function $v$ has a discontinuity at $t = 0$.
We overcome this difficulty in the following way: subtracting to $v(t,x)$ the function $f(x) H(t) e^{-t}$, (here $H$ is the Heaveside step function) we obtain a continuous function.
The Fourier transform in time of the difference is $k(x) = u(x) - \frac {f(x)}{1+i\xi}$ and one can easily verify that
\[
	- k''(x) + i \xi k(x) = \frac {f(x) + f''(x)}{1+i\xi}\;. \]
Finally, the estimate \eqref{s5intro_1} applied to $\frac {f+f''}{1+i\xi}$ gives
\[|k(x)| \leq C \|f + f''\|_2 |\xi|^{- \frac 32}\;,\]
which yields the desired $L^1$ bound for $k$, provided by the fact that $k_0 = f_0 = 0$.
Consequently, the inverse Fourier transform 
\[v(t,x) = \mc F^{-1}(k) (t,x) + f(x) H(t) e^{-t}\]
 is continuous in time, for $t \in (0, \infty)$.

Unfortunately, the estimate \eqref{s5intro_2} fails in the general case when $a$ is not constant.
Under additional assumptions on $W$, this estimate holds and one can obtain solutions with improved regularity. This is left for future work.

\subsection{Decay speed of the solutions of \eqref{xi_ODE}}

For the rest of the section, all integrals are with respect to the Lebesgue measure.

\begin{definition}
	We will denote 
	\[\A = \left\{a\in \Lp \; ; \;a\geq 0 \hbox{ and } \int a = 1\right\}\;.\]
	For $\xi \in \bb R$ and $a \in \A$, we define the operator $T_{\xi,a}:\WWp \to \Lp$ by 
	\[
		T_{\xi,a}(u) = -u'' + i \xi a u.
	\]
	Here $\WWp$ is identified with the closed subspace of $\WW$, consisting of functions $u$ such that $u(0) = u(1)$ and $u'(0) = u'(1)$. 
\end{definition}

It is easy to prove that $T_{\xi,a}$ is a Fredholm operator of index $0$.
Saying that $u$ is a solution of equation \eqref{xi_ODE} is equivalent to $T_{\xi,a}(u) = af$.
\begin{definition}
	For $a \in \A$, we define the seminorms
	\[[u]_a = \sqrt{\int a|u|^2} \;\;\hbox{ and }\;\; \|u\|_\Ha = \sqrt{\|u'\|_2^2 + \int a|u|^2}\;.\]
	Observe that if $u$ is bounded, then $[u]_a \leq \|u\|_\infty$.
\end{definition}

\begin{lemma}
	\label{equivalentnorm}
	For any $a\in \A$, we have
	\[\|u\|_{H^1} \leq C \|u\|_\Ha\]
	where $C$ is independent of $a$.
	In particular $\|.\|_\Ha$ is a norm.
\end{lemma}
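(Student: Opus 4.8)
The plan is to control $\|u\|_{H^1}^2 = \|u'\|_2^2 + \|u\|_2^2$ by $\|u\|_\Ha^2 = \|u'\|_2^2 + \int a|u|^2$; since the $\|u'\|_2^2$ terms agree, the whole content is the estimate $\|u\|_2^2 \le C(\|u'\|_2^2 + \int a|u|^2)$ with $C$ independent of $a \in \A$. I would prove this by a Poincaré–Wirtinger type argument. First I would split $u$ into its mean and oscillation: write $u = \med{u} + \tilde u$ where $\med{u} = \int_{\bb T} u\,dx$ and $\int_{\bb T}\tilde u\,dx = 0$. The oscillation is harmless: the standard periodic Poincaré inequality gives $\|\tilde u\|_2 \le C\|\tilde u'\|_2 = C\|u'\|_2$, with $C$ absolute. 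So it remains to bound the constant $|\med u|$, and this is precisely where the measure $a$ enters.

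For the mean, I would use that $a \ge 0$ with $\int a = 1$. Write $\int a |u|^2 = \int a |\med u + \tilde u|^2$. Expanding and using $|\med u + \tilde u|^2 \ge \tfrac12 |\med u|^2 - |\tilde u|^2$ pointwise, together with $\int a = 1$, we get
\[
\int a|u|^2 \ge \tfrac12 |\med u|^2 - \int a|\tilde u|^2 \ge \tfrac12|\med u|^2 - \|\tilde u\|_\infty^2.
\]
Now I would use a periodic Sobolev/Agmon-type interpolation bound $\|\tilde u\|_\infty \le C(\|\tilde u\|_2 + \|\tilde u'\|_2) \le C\|u'\|_2$ (the last step again by Poincaré, since $\tilde u$ has zero mean), valid for $\tilde u \in \WWp \subset H^1(\bb T)$ with an absolute constant. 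This yields
\[
|\med u|^2 \le 2\int a|u|^2 + 2\|\tilde u\|_\infty^2 \le 2\int a|u|^2 + C\|u'\|_2^2,
\]
with $C$ independent of $a$. Combining with $\|u\|_2^2 \le 2|\med u|^2 + 2\|\tilde u\|_2^2 \le 2|\med u|^2 + C\|u'\|_2^2$ gives $\|u\|_2^2 \le C(\|u'\|_2^2 + \int a|u|^2) = C\|u\|_\Ha^2$, hence $\|u\|_{H^1}^2 \le (1+C)\|u\|_\Ha^2$, as desired. The ``in particular'' clause is then immediate: $\|u\|_\Ha = 0$ forces $\|u\|_{H^1} = 0$, so $u = 0$, and the remaining norm axioms are clear since $[\cdot]_a$ is a seminorm.

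The only subtle point — the ``main obstacle'' — is that a priori $a$ could concentrate all its mass near a single point, and one must be sure the pointwise bound on the oscillation $\tilde u$ is genuinely uniform in $a$. This is why I route the argument through $\|\tilde u\|_\infty$ rather than through $\int a|\tilde u|^2$ with a measure-dependent constant: the embedding $H^1(\bb T) \hookrightarrow L^\infty(\bb T)$ has an absolute constant, and $\int a = 1$ then absorbs it regardless of where $a$ lives. One should also double-check that functions in $\WWp$ (as defined, the closed subspace of $\WW$) are genuinely continuous and periodic so that the sup-norm bound and the splitting into mean plus oscillation make sense; this is immediate from $W^{2,1}(\bb T) \subset C^1(\bb T)$.
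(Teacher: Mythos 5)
Your proof is correct, but it follows a genuinely different route from the paper's. You split $u$ into its mean $\med{u}$ and oscillation $\tilde u$, control the oscillation by Poincar\'e--Wirtinger and the embedding $H^1(\bb T)\hookrightarrow L^\infty(\bb T)$ (both with absolute constants), and then recover $|\med{u}|$ from $\int a|u|^2$ using the normalization $\int a=1$; all the individual steps (including the pointwise inequality $|z+w|^2\ge \tfrac12|z|^2-|w|^2$) check out. The paper instead anchors at the point where $|u|^2$ attains its minimum: since $\int a=1$ one has $[u]_a^2\ge\min|u|^2$, and the fundamental theorem of calculus applied to $|u|^2$ gives $\max|u|^2-\min|u|^2\le \int\bigl|(|u|^2)'\bigr|\le 2\|u\|_2\|u'\|_2$, after which Young's inequality absorbs $\|u\|_2\le\|u\|_\infty$ and yields $\|u\|_\infty^2\le 4\|u\|_\Ha^2$, hence $\|u\|_{H^1}^2\le 5\|u\|_\Ha^2$. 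Both arguments exploit the same underlying fact --- a probability density must charge a region where $|u|$ is not small --- so both are equally robust against $a$ concentrating near a single point, which you correctly identify as the main obstacle. The paper's version is shorter, self-contained (only the fundamental theorem of calculus, Cauchy--Schwarz and Young), and produces explicit constants together with the slightly stronger sup-norm bound $\|u\|_\infty\le 2\|u\|_\Ha$ as a byproduct; yours is more modular and reduces everything to standard named inequalities. Your remarks on the norm axioms and on the continuity of elements of $\WWp$ are fine.
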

\begin{proof}
	Observe that $[u]_a^2 \geq \min\{|u|^2\}$ and
	\begin{equation*}
	\begin{split}
\|u\|_\infty^2 & = \max\{|u|^2\} = \left( \max\{|u|^2\} -\min\{|u|^2\} \right) + \min\{|u|^2\}\\
&\leq \int 2 \Re (u \o u') + [u]_a^2 \leq 2 \|u\|_2 \|u'\|_2 + [u]_a^2\;.
	\end{split}
	\end{equation*}
	By Young inequality,
	\[\|u\|_\infty^2 \leq \frac 12 \|u\|_2^2 + 2 \|u'\|_2^2 + [u]_a^2 \leq \frac 12 \|u\|_\infty^2 + 2 \|u\|_\Ha^2\;,\]
	from what we get $\|u\|_\infty^2 \leq 4 \|u\|_\Ha^2$  and finally
	\[\|u\|_{H^1}^2 \leq \|u'\|_2^2 + \|u\|_\infty^2 \leq 5 \|u\|_\Ha^2\;.\]
\end{proof}

\begin{lemma}
	\label{T_inf_bounded}
	Let $a \in \A$ and $\xi \in \R \setminus \{0\}$.
	Then $T_{\xi,a}:\WWp \to \Lp$ is an isomorphism of Banach spaces.
	Moreover, if $f$ is bounded and $T_{\xi, a}(u) = af$, then $u \in \Hp$ and $\|u\|_\Ha \leq C_\xi [f]_a$.
\end{lemma}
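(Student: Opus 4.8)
The statement has two parts: first, that $T_{\xi,a}$ is a Banach-space isomorphism for $\xi \neq 0$; and second, the a priori estimate $\|u\|_\Ha \leq C_\xi [f]_a$ when $T_{\xi,a}(u) = af$ with $f$ bounded. Since $T_{\xi,a}$ is already known to be Fredholm of index $0$, for the first part it suffices to prove injectivity. So I would begin by supposing $T_{\xi,a}(u) = 0$, i.e. $-u'' + i\xi a u = 0$. Pairing against $\bar u$ in $L^2$ and integrating by parts using the periodic boundary conditions gives
\[
\int |u'|^2 + i\xi \int a |u|^2 = 0.
\]
Taking real and imaginary parts separately (and using $\xi \neq 0$) forces $\int |u'|^2 = 0$ and $\int a|u|^2 = 0$, hence $u$ is constant and $[u]_a = 0$; but $\int a = 1$ and $a \geq 0$, so this constant must be $0$. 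Thus $\ker T_{\xi,a} = 0$, and combined with index $0$ and closed range, $T_{\xi,a}$ is a bijection; the bounded inverse theorem gives that it is an isomorphism.

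For the a priori estimate, I would run the same energy identity but now with right-hand side $af$ instead of $0$: pairing $-u'' + i\xi au = af$ against $\bar u$ and integrating by parts yields
\[
\|u'\|_2^2 + i\xi \int a|u|^2 = \int a f \bar u.
\]
The real part gives $\|u'\|_2^2 = \Re \int a f \bar u \leq [f]_a [u]_a$ by Cauchy–Schwarz with respect to the measure $a\,dx$. That controls $\|u'\|_2$ in terms of $[u]_a$, but I still need to bound $[u]_a$ itself. For this I would use the imaginary part: $|\xi| \int a|u|^2 = |\Im \int af\bar u| \leq [f]_a[u]_a$, so $[u]_a^2 = \int a|u|^2 \leq |\xi|^{-1}[f]_a[u]_a$, giving $[u]_a \leq |\xi|^{-1}[f]_a$. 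Feeding this back, $\|u'\|_2^2 \leq [f]_a [u]_a \leq |\xi|^{-1}[f]_a^2$, and therefore
\[
\|u\|_\Ha^2 = \|u'\|_2^2 + [u]_a^2 \leq \big(|\xi|^{-1} + |\xi|^{-2}\big)[f]_a^2,
\]
which is the claimed bound with $C_\xi = \sqrt{|\xi|^{-1} + |\xi|^{-2}}$.

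It remains to justify the regularity claim $u \in \Hp$ and that the integration by parts is legitimate. A priori $u \in \WWp \subset H^2 \subset C^1$, so the pairing and integration by parts are valid with no boundary terms because of the periodic conditions $u(0)=u(1)$, $u'(0)=u'(1)$; in particular $u \in \Hp$ is automatic from $u \in \WWp$. (If instead one wants to start from a merely weak solution, one bootstraps: $u'' = i\xi a u - af \in L^1$ forces $u' \in W^{1,1} \subset C$, hence $u \in C^1$, and the identity is recovered by approximating $\bar u$ in $H^1$ by smooth periodic functions and passing to the limit, using Lemma \ref{equivalentnorm} to control the $H^1$ norm.)

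The only genuinely delicate point is that the constant $C_\xi$ must be independent of $a$ — and the computation above delivers exactly that, since $\int a = 1$ was the only structural fact about $a$ used, entering solely through the normalization that makes $[\cdot]_a$ a genuine seminorm and through Cauchy–Schwarz in $L^2(a\,dx)$. I do not expect a serious obstacle here; the argument is the standard coercivity/Lax–Milgram-type estimate adapted to the degenerate weight, and the splitting into real and imaginary parts is what decouples the control of $\|u'\|_2$ from the control of $[u]_a$. The one thing to be careful about is not to lose track of the $\xi$-dependence: both powers $|\xi|^{-1}$ and $|\xi|^{-2}$ appear, and only the worse one ($|\xi|^{-2}$, i.e. the $|\xi|^{-1}$ decay of $\|u\|_\Ha$) survives, which is consistent with the heuristic estimate \eqref{s5intro_1} in the constant-coefficient case.
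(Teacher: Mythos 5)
Your proof is correct and follows essentially the same route as the paper: Fredholm index zero reduces the isomorphism claim to injectivity, and both injectivity and the a priori bound come from pairing the equation with $\bar u$ and applying Cauchy--Schwarz in $L^2(a\,dx)$ — the paper merely bounds the modulus of the complex energy identity below by $\min\{1,|\xi|\}\,\|u\|_\Ha^2$ instead of splitting into real and imaginary parts, which is a cosmetic difference. One small slip: $\WWp=W^{2,1}(\bb T)$ is \emph{not} contained in $H^2$; the correct (and sufficient) chain is $W^{2,1}(\bb T)\subset C^1(\bb T)\subset \Hp$, which your bootstrap remark already supplies.
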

\begin{proof}
	For the first statement, since $T_{\xi, a}$ is a Fredholm operator of index $0$ it suffices to prove injectivity.
	Computing
	\begin{equation}\label{eq5.3}
	\begin{split}
	|\langle T_{\xi, a}u, u \rangle_{L^2}| & =\left| \int |u'|^2 + i\xi \int a |u|^2\right| \geq \min\{1,|\xi| \} \,\|u\|_\Ha^2\;,\\
	\end{split}
	\end{equation}
	we note that  $T_{\xi, a}(u) = 0$ implies $u = 0$.
	This proves the first statement.
	For the second, notice that by H\"older inequality
	\[\left|\int a f \o{ u }\;\right| \leq [f]_a [u]_a\,. \]
	Hence  $T_{\xi, a}(u) = af$ put together with \eqref{eq5.3} imply
	\[\min\{1,\xi\} \|u\|_\Ha^2 \leq [f]_a [u]_a \leq [f]_a \|u\|_\Ha\]
	and the result follows.
\end{proof}

\begin{lemma}
	\label{u_is_bounded}
	Let $a \in \A$, assume $\int a f = 0$ and  $u$ is the solution of \eqref{xi_ODE}.	
	Then $$\|u\|_\infty \leq 2 [f]_a\; .$$
\end{lemma}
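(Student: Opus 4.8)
The plan is to first unlock the hidden consequence of the hypothesis $\int a f = 0$: the solution itself has zero weighted average, $\int_{\bb T} a u = 0$. Indeed, integrating equation \eqref{xi_ODE} over $\bb T$, the term $\int_{\bb T} u''$ vanishes since $u \in \WWp$ satisfies $u'(0) = u'(1)$, while the right-hand side integrates to $\int_{\bb T} a f = 0$; hence $i\xi \int_{\bb T} a u = 0$, which gives $\int_{\bb T} a u = 0$ when $\xi \neq 0$. (When $\xi = 0$ the equation determines $u$ only up to an additive constant, and one fixes that constant by imposing $\int_{\bb T} a u = 0$.)

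Next I would record the basic energy identity obtained by pairing \eqref{xi_ODE} with $u$ in $\LLp$ and integrating by parts, as in the computation leading to \eqref{eq5.3}:
\[
	\|u'\|_2^2 + i\xi\,[u]_a^2 \;=\; \langle T_{\xi,a} u, u \rangle_{L^2} \;=\; \int_{\bb T} a f\, \o u\;.
\]
Taking real parts and using the Cauchy--Schwarz inequality with respect to the measure $a\,dx$, this yields
\[
	\|u'\|_2^2 \;=\; \Re \int_{\bb T} a f\, \o u \;\le\; [f]_a\,[u]_a\;.
\]
Note that, crucially, the imaginary part of the identity (which carries the factor $\xi$, hence a $1/\xi$ in the usual estimate of Lemma \ref{T_inf_bounded}) is never used.

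It remains to bound $\|u\|_\infty$ and $[u]_a$ by $\|u'\|_2$, and here the zero weighted average enters. For any $x \in \bb T$, since $\int_{\bb T} a = 1$ and $\int_{\bb T} a u = 0$, one may write $u(x) = \int_{\bb T} a(y)\big(u(x) - u(y)\big)\,dy$; combined with the elementary bound $|u(x) - u(y)| \le \|u'\|_1 \le \|u'\|_2$ this gives $\|u\|_\infty \le \|u'\|_2$, and therefore also $[u]_a \le \|u\|_\infty \le \|u'\|_2$. Feeding this back into the energy estimate, $\|u'\|_2^2 \le [f]_a \|u'\|_2$, so $\|u'\|_2 \le [f]_a$ and finally $\|u\|_\infty \le \|u'\|_2 \le [f]_a \le 2[f]_a$, which is even sharper than claimed.

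The only genuinely non-routine point is the very first step, namely realizing that $\int a f = 0$ propagates to $\int a u = 0$. This is precisely what produces an estimate uniform in $\xi$; without it one is stuck with the $\xi$-dependent constant $C_\xi$ of Lemma \ref{T_inf_bounded}, which blows up as $\xi \to 0$. The rest (integration by parts, Cauchy--Schwarz with the weight $a$, and the bound $|u(x) - u(y)| \le \|u'\|_1$) is standard.
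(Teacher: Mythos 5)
Your proof is correct, and it follows the same overall strategy as the paper's (energy identity from pairing \eqref{xi_ODE} with $\o u$, the observation that integrating the equation yields $\int a u = 0$, and a Poincar\'e-type sup bound), but the implementation differs in a way worth noting. The paper decomposes $u = p + (u-p)$ with $p = \int u$ the \emph{unweighted} average: it uses $\int af = 0$ to replace $\o u$ by $\o{(u-p)}$ in the pairing, bounds $\|u-p\|_\infty \le \|u'\|_2$ by Poincar\'e, and then separately deduces $|p| \le \|u-p\|_\infty$ from $\int au = 0$, which is where the factor $2$ in $\|u\|_\infty \le |p| + \|u-p\|_\infty \le 2\|u-p\|_\infty$ comes from. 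You instead work directly with the \emph{weighted} mean: from $\int a u = 0$ and $\int a = 1$ you write $u(x) = \int a(y)(u(x)-u(y))\,dy$ and get $\|u\|_\infty \le \|u'\|_2$ in one step, then close the loop via $\|u'\|_2^2 \le [f]_a[u]_a \le [f]_a\|u\|_\infty$. This avoids the intermediate constant $p$ entirely, uses $\int af = 0$ only once (to derive $\int au = 0$), and yields the sharper bound $\|u\|_\infty \le [f]_a$. Your side remark on $\xi = 0$ (where $T_{0,a}$ is not injective and the ``solution'' must be normalized) is a point the paper glosses over; both arguments implicitly assume $\xi \neq 0$, consistent with Lemma \ref{T_inf_bounded}.
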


\begin{proof}
	Multiply equation \eqref{xi_ODE} by $\o u$ and integrate in order to obtain
	\begin{equation}
		\label{u_is_bounded_1}
		 \int |u'|^2 + i \xi \int a |u|^2 = \int a f \o u\;.
	\end{equation}
	Denote $p = \int u \in \C$. Since $\int af=0$, 
	\[\left|\int a f \o u\;\right|=\left|\int a f \o{(u - p)} \right|\leq [f]_a [u-p]_a \leq [f]_a \|u-p\|_\infty \leq [f]_a \|u'\|_2\;,\]
	where we have used the Poincar\'e inequality in the last inequality of above.
	Notice that since the left-hand side integrals of \eqref{u_is_bounded_1} are real,
	\[ \|u'\|_2^2 \leq \left| \int |u'|^2 + i \xi \int a |u|^2 \right| \leq [f]_a \|u'\|_2\;.\]
	Consequently,
	\[ \|u - p\|_\infty \leq \|u'\|_2 \leq [f]_a\;.\]

	Now we need to bound $p$.
	Integrating each side of \eqref{xi_ODE} we get $\int a u = 0$.
	Denote the (real) inner product of complex numbers by $\langle z, w \rangle = \Re (z \o w)$.
	Equation $\int a u = 0$ implies $\int a(x) \langle u(x), p \rangle dx = 0$. Then
	\[|p|^2 \int a = \int a(x) \langle p, p \rangle dx = \int a(x) \langle p - u(x), p \rangle dx \leq \|u-p\|_\infty |p| \int a\;.\]
	We therefore obtain $ |p| \leq \|u-p\|_\infty$,	leading to $$\|u\|_\infty \leq |p| + \|u-p\|_\infty \leq 2 \|u-p\|_\infty \leq 2[f]_a\;.$$
\end{proof}

\begin{lemma}
	\label{u_falls_fast}
	Let $a \in \A$, assume $f$ is bounded and $u$ is the solution of \eqref{xi_ODE}.
	Then, for any $|\xi| \geq 1$,
	\[\|u\|_{H^1} \leq C |\xi|^{-\frac 12} [f]_a\;,\]
	where the constant $C$ is independent of $a$.
\end{lemma}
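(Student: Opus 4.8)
The plan is to run the basic energy estimate for \eqref{xi_ODE}, the same one already used in Lemmas \ref{T_inf_bounded} and \ref{u_is_bounded}, but to extract the decay in $\xi$ by examining the real and the imaginary part of the resulting identity separately. The key point is that this must be done \emph{without} the hypothesis $\int a f=0$.

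First I would observe that since $|\xi|\ge 1$ we have $\xi\neq 0$, so by Lemma \ref{T_inf_bounded} the solution $u$ of \eqref{xi_ODE} exists, is unique, and belongs to $\Hp$; in particular $u$ is bounded and every integral below is finite. Next, pairing \eqref{xi_ODE} with $\o u$, integrating over $\bb T$ and integrating by parts the term $\int(-u'')\o u$ using the periodic conditions $u(0)=u(1)$, $u'(0)=u'(1)$, one obtains the identity \eqref{u_is_bounded_1}, namely
\[
	\int |u'|^2 \;+\; i\xi\int a|u|^2 \;=\; \int a f\,\o u\;.
\]
Both integrals on the left are real, so the real part of this identity is $\|u'\|_2^2=\Re\int af\,\o u$ and the imaginary part is $\xi\int a|u|^2=\Im\int af\,\o u$. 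Bounding the right-hand side by the Cauchy--Schwarz inequality for the measure $a\,dx$, $\bigl|\int af\,\o u\bigr|\le [f]_a[u]_a$, the imaginary part yields $|\xi|\,[u]_a^2\le [f]_a[u]_a$, hence $[u]_a\le [f]_a/|\xi|$ (trivially also when $[u]_a=0$), while the real part yields $\|u'\|_2^2\le [f]_a[u]_a\le [f]_a^2/|\xi|$. Combining these and using $|\xi|\ge 1$,
\[
	\|u\|_\Ha^2 \;=\; \|u'\|_2^2+[u]_a^2 \;\le\; \frac{[f]_a^2}{|\xi|}+\frac{[f]_a^2}{|\xi|^2}\;\le\;\frac{2[f]_a^2}{|\xi|}\;.
\]
Finally, Lemma \ref{equivalentnorm} converts this into $\|u\|_{H^1}\le C\|u\|_\Ha\le C\sqrt2\,|\xi|^{-1/2}[f]_a$ with $C$ independent of $a$, which is the claim.

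I do not expect a genuine obstacle; the argument is short. The one point that deserves care is that, unlike in Lemma \ref{u_is_bounded}, here $u$ need not have vanishing $a$-average, so the Poincar\'e-based trick of that lemma is unavailable, and the decay must be produced intrinsically from the energy identity: the imaginary part alone gives $[u]_a=O(|\xi|^{-1})$, and feeding this back into the real part gives $\|u'\|_2=O(|\xi|^{-1/2})$, which is the dominant term and fixes the exponent $-\tfrac12$. One should also check the routine points that the integration by parts is legitimate ($u\in\WWp$, so $u'$ is absolutely continuous, $u'\in L^2$, and the boundary terms cancel by periodicity) and that $[f]_a\le\|f\|_\infty<\infty$, so that the right-hand side is meaningful.
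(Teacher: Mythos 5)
Your proof is correct and follows essentially the same route as the paper: both start from the energy identity \eqref{u_is_bounded_1}, extract $[u]_a\le [f]_a/|\xi|$ from the imaginary part and $\|u'\|_2^2\le [f]_a[u]_a\le [f]_a^2/|\xi|$ from the real part, and conclude via Lemma \ref{equivalentnorm}. The only cosmetic difference is that you separate real and imaginary parts explicitly, while the paper bounds each real summand by the modulus of the whole left-hand side; the content is identical.
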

\begin{proof} By Lemma \ref{equivalentnorm}, it is sufficient to show that
\[\|u\|_{\Ha} \leq |\xi|^{-\frac 12} [f]_a\;.\]
	From \eqref{u_is_bounded_1} we have that
	\begin{equation}
		\label{u_falls_fast_1}
		\left|\xi \int a|u|^2 \right| \leq \left|\int |u'|^2 + i \xi \int a |u|^2\right| \leq [f]_a [u]_a\;.
	\end{equation}
	Thus,
	\[|\xi| \;[u]_a^2 \leq [f]_a [u]_a\;,\]
	which implies
	\begin{equation}\label{u_falls_fast_2}
[u]_a \leq |\xi|^{-1} [f]_a \leq |\xi|^{-\frac 12} [f]_a
\end{equation}
	because $|\xi| \geq 1$.
	On the other hand, from \eqref{u_falls_fast_1},
	\[\left|\int |u'|^2 \right| \leq [f]_a [u]_a \leq [f]_a^2 |\xi|^{-1}\;, \]
	implying
	\begin{equation}\label{u_falls_fast_3}
		\|u'\|_2 \leq |\xi|^{-\frac 12} [f]_a\;.
		\end{equation}
	Putting together \eqref{u_falls_fast_2} and \eqref{u_falls_fast_3}, we obtain that
	$ \|u\|_\Ha \leq  |\xi|^{-\frac 12}[f]_a$, 
	finishing the proof.
\end{proof}

\begin{lemma}
	\label{u_falls_faster}
	Let $a \in \A$, $|\xi| \geq 1$, $f \in \HHp$ and in addition $f'' = a g$, where $g$ is a bounded function.
	Let $u$ be the solution of \eqref{xi_ODE}. Then, for $k$ defined through  
\begin{equation}\label{u_falls_faster_1}
u(x) = k(x) + \frac{f(x)}{1+i\xi}\;,
\end{equation}
 holds the estimate
	\begin{equation}
	\label{u_falls_faster_2}
	|k\|_{H^1} \leq C |\xi|^{-\frac 32} [f+g]_a\;,
	\end{equation}
	where the constant $C$ does not depend on $a$.
	Moreover, if $\int af = 0$, then $\|k\|_\infty$ is bounded by $2[f + g]_a$.
\end{lemma}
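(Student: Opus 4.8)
The plan is to observe that $k$ solves an equation of the same form as \eqref{xi_ODE}, with $f$ replaced by the rescaled source $\frac{f+g}{1+i\xi}$, and then to quote Lemmas \ref{u_falls_fast} and \ref{u_is_bounded} for this new source. The extra factor $|\xi|^{-1}$ appearing in \eqref{u_falls_faster_2}, compared with Lemma \ref{u_falls_fast}, will come for free from the denominator $1+i\xi$.

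First I would derive the equation for $k$. Writing $u = k + \frac{f}{1+i\xi}$ and using $f''=ag$, one has $u'' = k'' + \frac{ag}{1+i\xi}$; inserting this into \eqref{xi_ODE} and collecting the terms carrying the factor $af$ gives, after a one-line simplification,
\[
	-k''(x) + i\xi\, a(x)\, k(x) \;=\; a(x)\,\frac{f(x)+g(x)}{1+i\xi}\;,
\]
so that $k$ solves \eqref{xi_ODE} with source function $\tilde f := \frac{f+g}{1+i\xi}$. I then check that this source fits the hypotheses of the earlier lemmas: $\tilde f$ is bounded because $f\in\HHp\subseteq C(\bb T)$ and $g$ is bounded; $k = u-\frac{f}{1+i\xi}$ is periodic and lies in $\WWp$, since $u\in\WWp$ by Lemma \ref{T_inf_bounded} and $\HHp\subseteq\WWp$ on the torus; and by injectivity of $T_{\xi,a}$, $k$ is indeed \emph{the} solution of \eqref{xi_ODE} attached to $\tilde f$.

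Now Lemma \ref{u_falls_fast} applied to $k$ and $\tilde f$ gives, for $|\xi|\geq 1$, the bound $\|k\|_{H^1}\leq C|\xi|^{-1/2}[\tilde f]_a$ with $C$ independent of $a$. A direct computation of the seminorm yields $[\tilde f]_a = \frac{[f+g]_a}{|1+i\xi|}\leq |\xi|^{-1}[f+g]_a$ (using $|1+i\xi|\geq|\xi|\geq 1$), and substituting this into the previous bound produces exactly \eqref{u_falls_faster_2}. For the final assertion I would transfer the mean-zero condition to $\tilde f$: since $\int af=0$ by hypothesis and $\int ag = \int f'' = 0$ by periodicity of $f'$, we get $\int a\tilde f = \frac{1}{1+i\xi}\big(\int af + \int ag\big) = 0$, so Lemma \ref{u_is_bounded} applies and gives $\|k\|_\infty\leq 2[\tilde f]_a\leq 2[f+g]_a$. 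The only point that requires a little care is the algebra leading to the $k$-equation together with the verification that the boundedness and periodicity hypotheses of Lemmas \ref{u_falls_fast} and \ref{u_is_bounded} genuinely hold for $\tilde f$; beyond that there is no real obstacle, since this lemma is tailored precisely to feed into the $L^1(\R)$-in-$\xi$ estimate used at the end of the section.
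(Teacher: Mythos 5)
Your proposal is correct and follows essentially the same route as the paper: substitute the decomposition into \eqref{xi_ODE}, use $f''=ag$ to reduce the source to $a(f+g)/(1+i\xi)$, and then invoke Lemmas \ref{u_falls_fast} and \ref{u_is_bounded}. The only (harmless) difference is that you package the source as $\tilde f=(f+g)/(1+i\xi)$ and absorb the factor $|1+i\xi|^{-1}$ through $[\tilde f]_a$, while the paper keeps that factor outside the seminorm; your extra checks on periodicity and boundedness of the new data are sound and merely make explicit what the paper leaves implicit.
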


\begin{proof}
		Replacing \eqref{u_falls_faster_1} into equation \eqref{xi_ODE} we obtain
	\[
		-k'' + i \xi a k = \frac {f''}{1+i \xi} + a f - a f \frac {i \xi}{1+ i \xi}\;.
	\]
	 Since $f''= a g$, we get
	\[
		-k'' + i \xi a k = \frac {a(f+g)} {1+i \xi}\;.
	\]
	Applying Lemma \ref{u_falls_fast}, we obtain
	\[
		\|k\|_{H^1} \leq C |\xi|^{-\frac 12} |1+i\xi|^{-1} [f + g]_a \leq C |\xi|^{-\frac 32} [f+g]_a\;.
	\]
	and \eqref{u_falls_faster_2} follows.
	Additionally, if we have $\dd\int a f = 0$, then $\dd\int a(f+g) = 0$ because $f$ is periodic.
	This permits to invoke Lemma \ref{u_is_bounded} proving that $\|k\|_\infty \leq 2[f+g]_a$.
\end{proof}

\subsection{Continuity of the solution of \eqref{xi_ODE}}
\label{ss52}

Throughout this section we assume the conditions of Theorem \ref{mainthm}, namely,
\begin{enumerate}[(i)]
\item $W_n \to W$ vaguely, which implies that $a_n \to a$ weakly in $L^1$;

\item $f_n'' = a_n g_n$ with $g_n(x) = \mc L_{W_n} h_n(w_n(x))$ uniformly bounded, as in Proposition \ref{DW_parametrization};

\item $f_n \to f$ in $\HHp$.
\end{enumerate}

The main result of this section is contained in the propositions \ref{v_of_a_is_continuous} and \ref{v_is_the_solution} that is, the solutions $v_n$ of \eqref{a_equation} converge to $v$ in the space $C(\bb T, C^{\frac 12 -\eps}(\R_+))$ for every $\eps > 0$.

For simplicity, we shall assume without loss of generality that 
\[\int h_n(x) dx = \int a_n(x) f_n(x) dx = 0\;.\]

\begin{lemma}
	\label{u_of_a_is_w_continuous_pointwise}
	Fix $\xi \in \R \setminus \{0\}$.
	Let $u_n(x) = T^{-1}_{\xi, a_n}(a_n f_n)$ and $u(x) = T^{-1}_{\xi, a}(a f)$. In other words, $u_n$ is the solution of \eqref{xi_ODE} with $a$ replaced by $a_n$.
	Denote $k_n(x) = u_n(x) - \frac {f_n(x)}{1+i\xi}$ and analogous definition for $k(x)$.
	Then, it holds the convergence $k_n \to k$ uniformly. 
\end{lemma}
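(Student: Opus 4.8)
The plan is to fix $\xi \neq 0$ and work with the functions $k_n$ directly, since they satisfy a cleaner equation than the $u_n$. By the computation in the proof of Lemma \ref{u_falls_faster}, each $k_n$ solves $-k_n'' + i\xi a_n k_n = \frac{a_n(f_n+g_n)}{1+i\xi}$, and likewise $k$ solves $-k'' + i\xi a k = \frac{a(f+g)}{1+i\xi}$, where $g_n(x) = \mc L_{W_n} h_n(w_n(x))$ and $g$ is its analogue. First I would record the uniform bounds already available: by Lemma \ref{u_falls_faster} (or rather its proof, applied at this fixed $\xi$) the $H^1$-norms $\|k_n\|_{H^1}$ are bounded by $C_\xi [f_n+g_n]_{a_n}$, and since $g_n$ is uniformly bounded, $f_n \to f$ in $\HHp \subset L^\infty$, and $\int a_n = 1$, the seminorms $[f_n+g_n]_{a_n}$ are uniformly bounded. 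Hence $\{k_n\}$ is bounded in $\Hp$, so by Rellich it is precompact in $C(\bb T)$; it suffices to show every uniformly convergent subsequence has limit $k$.

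So pass to a subsequence with $k_n \to \tilde k$ uniformly and $k_n \rightharpoonup \tilde k$ weakly in $\Hp$. The key step is to pass to the limit in the weak formulation: for every test function $\phi \in \CCp$,
\[
\int k_n' \phi' + i\xi \int a_n k_n \phi = \frac{1}{1+i\xi}\int a_n(f_n+g_n)\phi\;.
\]
The first term converges to $\int \tilde k' \phi'$ by weak $H^1$ convergence. For the terms involving $a_n$, I would use that $a_n \to a$ weakly in $L^1$ together with the strong (uniform) convergence of the other factors: $k_n \phi \to \tilde k \phi$ uniformly, $f_n \phi \to f\phi$ uniformly, so $\int a_n k_n \phi \to \int a k \phi$ and $\int a_n f_n \phi \to \int a f \phi$ — here one splits $\int a_n k_n\phi - \int a \tilde k\phi = \int a_n(k_n\phi - \tilde k\phi) + (\int a_n \tilde k\phi - \int a\tilde k\phi)$, the first piece bounded by $\|k_n\phi - \tilde k\phi\|_\infty$ since $\int a_n = 1$, the second tending to zero by weak-$L^1$ convergence against the fixed $C(\bb T)$ function $\tilde k\phi$. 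The genuinely delicate term is $\int a_n g_n \phi$: since $g_n = \mc L_{W_n}h_n \circ w_n$ is only uniformly \emph{bounded} with no convergence assumed, I cannot pass to the limit directly. Instead I would avoid it by noting $a_n g_n = f_n''$ (in the weak sense of Proposition \ref{DW_parametrization}), so $\int a_n g_n \phi = \int f_n'' \phi = \int f_n \phi''$, which converges to $\int f\phi'' = \int f''\phi$ because $f_n \to f$ in $\HHp$. This is the main obstacle and the reason the lemma is stated with the compatibility hypothesis rather than assuming $g_n$ converges.

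With all terms identified, $\tilde k$ satisfies $\int \tilde k'\phi' + i\xi\int a\tilde k\phi = \frac{1}{1+i\xi}(\int f\phi'' + \int af\phi)$ for all $\phi \in \CCp$; rewriting $\int f\phi'' = \int f''\phi = \int ag\phi$, this is exactly the weak form of $-\tilde k'' + i\xi a\tilde k = \frac{a(f+g)}{1+i\xi}$, i.e. the same equation as $k$. Finally I would invoke uniqueness: the operator on the left is injective by the coercivity estimate \eqref{eq5.3} of Lemma \ref{T_inf_bounded} (valid for $\xi \neq 0$), so $\tilde k = k$. Since every subsequential limit equals $k$ and the full sequence is precompact in $C(\bb T)$, we conclude $k_n \to k$ uniformly. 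A minor point to check is that $\tilde k$ is periodic with periodic derivative and lies in $\WWp$ so that it is a legitimate element of the domain of $T_{\xi,a}$; this follows from the weak $\Hp$ limit and elliptic regularity ($\tilde k'' = i\xi a\tilde k - \frac{a(f+g)}{1+i\xi} \in \Lp$), which upgrades $\tilde k$ to $\WWp$.
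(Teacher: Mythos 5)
Your proposal is correct and follows essentially the same route as the paper's proof: a uniform $H^1$ bound on $k_n$ from Lemma \ref{T_inf_bounded} giving precompactness in $C^0$, identification of any uniform accumulation point through the weak formulation against $\phi\in\CCp$ (using $\int a_n=1$, weak-$L^1$ convergence of $a_n$, and the substitution $a_ng_n=f_n''$ to handle the $g_n$ term), and injectivity of $T_{\xi,a}$ to conclude the limit is $k$. Your treatment of the $\int a_n g_n\phi$ term and of the membership $\tilde k\in\WWp$ is in fact slightly more careful than the paper's own ``adding and subtracting'' step, but the argument is the same.
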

\begin{proof}
	Recall from Lemma \ref{u_falls_faster} that
	\begin{equation}\label{u_of_a_is_w_continuous_pointwise_1}
	T_{\xi,a_n}(k_n) = a_n \frac{f_n+g_n}{1+i\xi}\;.
\end{equation}
	Since $[f_n+g_n]_{a_n} \leq \|f_n + g_n\|_\infty$ is bounded,  by Lemma \ref{T_inf_bounded}, the sequence $k_n$ is $H^1$ bounded, hence precompact in $C^0$.
	
	We shall prove that $k$ is the only $C^0$-accumulation point of $k_n$.
	Let $k^* \in C^0$ be such a point and $\phi \in C^2(\bb T)$ a test function.
	Take a subsequence of $k_n$ converging to $k^*$ and call it again $k_n$. Multiplying equation \eqref{u_of_a_is_w_continuous_pointwise_1} by $\phi$ and then integrating by parts,
	\[
		-\int k_n \phi'' + i \xi \int a_n k_n \phi = \int a_n (f_n + g_n) \phi\;.
	\]
Adding and subtracting suitable terms,
	\[
		-\int\! k_n \phi'' + i \xi \int\! a_n k^* \phi + i \xi \int\! a_n (k_n - k^*) \phi = \!\int a_n(f+g) \phi +\! \int\! a_n (f_n - f) \phi + \!\int\! (f_n'' - f'')\phi.
	\]
	Since $k_n \to k^*$ in $C^0$, $\|a_n\|_{L^1} = 1$, $a_n\to a$ weakly in $L^1$,  and $f_n \to f$ in $\HHp$, we can take limits obtaining
	\[-\int k^* \phi'' + i \xi \int a k^* \phi = \int a (f+g) \phi\;.\]
	Therefore, $k^* = T^{-1}_{\xi, a}(a (f+g)) = k$.
	Then $k_n \to k$ uniformly as desired.
\end{proof}	

Next, we recall the classical definition of weighted $L^2$-spaces, see \cite{PL} for details.
\begin{definition} For $s \geq 0$, let $L^2_s(\R)$  be the set of measurable functions $f:\R \to \R$ such that $|1+i \xi|^s f(\xi)\in L^2(\R)$.
It is a Banach space with the norm $\|f\|_s = \||1+i\xi|^s f(\xi)\|_{L^2}$. 
\end{definition}
Consider the space $C(\bb T, L^2_{1-\eps}(\R))$  with the norm
\begin{equation*}
\|k\| = \sup \Big\{ \,\|k(\cdot, x)\|_{L^2_{1-\eps}}\;;\; x\in {\bb T}\Big\}\;.
\end{equation*}

\begin{lemma}
	\label{u_of_a_is_w_continuous}

	Denote $u_n(\xi, x) = T^{-1}_{\xi, a_n}(a_n f_n)(x)$ and $k_n(\xi, x) = u_n(\xi, x) - \frac {f_n(x)}{1+i\xi}$.
	Denote similarly $u(\xi, x)$ and $k(\xi, x)$.
	Then, for every $\eps > 0$, holds the convergence $k_n \to k$ in the space $C(\bb T, L^2_{1-\eps}(\R))$.
\end{lemma}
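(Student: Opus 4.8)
The strategy is to upgrade the pointwise (in $\xi$) convergence $k_n(\xi,\cdot)\to k(\xi,\cdot)$ from Lemma \ref{u_of_a_is_w_continuous_pointwise} to convergence in $C(\bb T, L^2_{1-\eps}(\R))$ by a dominated-convergence argument with a uniform $L^2_{1-\eps}$-integrable envelope. First I would record the two regimes. For $|\xi| \ge 1$, Lemma \ref{u_falls_faster} applied with $a = a_n$, $f = f_n$, $g = g_n$ gives $\|k_n(\xi,\cdot)\|_{H^1} \le C|\xi|^{-3/2}[f_n+g_n]_{a_n}$, and since $[f_n+g_n]_{a_n} \le \|f_n+g_n\|_\infty$ is uniformly bounded (by hypothesis (ii) and $f_n\to f$ in $\HHp\hookrightarrow C(\bb T)$), this yields $\|k_n(\xi,\cdot)\|_\infty \le C|\xi|^{-3/2}$ uniformly in $n$ and in $x$. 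For $|\xi| \le 1$ (and $\xi \ne 0$), the "Moreover" clause of Lemma \ref{u_falls_faster} — using $\int a_n f_n = 0$, which holds by the normalization at the start of Section \ref{ss52} — gives $\|k_n(\xi,\cdot)\|_\infty \le 2[f_n+g_n]_{a_n} \le M$ for a constant $M$ independent of $n$, $x$, $\xi$. Together these produce a single dominating function $\Phi(\xi) = \min\{M,\, C|\xi|^{-3/2}\}$ with $|k_n(\xi,x)| \le \Phi(\xi)$ for all $n$, all $x \in \bb T$, and a.e. $\xi$; the same bound holds for $k$ by taking limits.

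Next I would check that $\Phi \in L^2_{1-\eps}(\R)$, i.e. that $|1+i\xi|^{1-\eps}\Phi(\xi) \in L^2(\R)$. Near $\xi = 0$ the integrand is bounded (by $2^{1-\eps}M$ say), so it is locally square-integrable; for large $|\xi|$ we have $|1+i\xi|^{1-\eps}\Phi(\xi) \le C|\xi|^{1-\eps}|\xi|^{-3/2} = C|\xi|^{-1/2-\eps}$, and $\int_{|\xi|\ge1}|\xi|^{-1-2\eps}\,d\xi < \infty$ for every $\eps>0$. Hence $\Phi \in L^2_{1-\eps}(\R)$. Now fix $x\in\bb T$. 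By Lemma \ref{u_of_a_is_w_continuous_pointwise}, for each fixed $\xi\ne0$ we have $k_n(\xi,x)\to k(\xi,x)$ (the sup-norm convergence there is in $x$, so in particular it holds at the given $x$, uniformly in $x$ actually), so the integrand $|1+i\xi|^{2(1-\eps)}|k_n(\xi,x)-k(\xi,x)|^2 \to 0$ pointwise a.e.\ in $\xi$ and is dominated by the fixed $L^1(\R)$ function $4\,|1+i\xi|^{2(1-\eps)}\Phi(\xi)^2$. By dominated convergence, $\|k_n(\cdot,x)-k(\cdot,x)\|_{L^2_{1-\eps}} \to 0$.

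Finally, to obtain convergence in the $\sup_x$ norm of $C(\bb T, L^2_{1-\eps}(\R))$ I would argue that this convergence is in fact uniform in $x$: the pointwise statement of Lemma \ref{u_of_a_is_w_continuous_pointwise} already gives $\sup_x|k_n(\xi,x)-k(\xi,x)|\to 0$ for each fixed $\xi$, the dominating envelope $\Phi$ is independent of $x$, so applying dominated convergence to $\sup_x|1+i\xi|^{2(1-\eps)}|k_n(\xi,x)-k(\xi,x)|^2$ (itself bounded by $4|1+i\xi|^{2(1-\eps)}\Phi(\xi)^2 \in L^1$ and tending to $0$ a.e.) yields $\sup_x\|k_n(\cdot,x)-k(\cdot,x)\|_{L^2_{1-\eps}}^2 \le \int \sup_x |1+i\xi|^{2(1-\eps)}|k_n(\xi,x)-k(\xi,x)|^2\,d\xi \to 0$. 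It remains to note that $k_n, k$ indeed define elements of $C(\bb T, L^2_{1-\eps}(\R))$, i.e.\ that $x\mapsto k_n(\cdot,x)$ is continuous into $L^2_{1-\eps}$; this follows from the continuous dependence of solutions of the ODE \eqref{xi_ODE} on parameters together with the same envelope $\Phi$ run through dominated convergence once more.

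The main obstacle is the behaviour near $\xi = 0$: the ODE \eqref{xi_ODE} degenerates there and the naive bound from Lemma \ref{u_falls_fast} blows up like $|\xi|^{-1/2}$. The point that rescues the argument is that $k_n$ — unlike $u_n$ — stays bounded near $\xi=0$ thanks to the "Moreover" clause of Lemma \ref{u_falls_faster}, which in turn relies essentially on the normalization $\int a_n f_n = 0$; verifying that this clause applies uniformly in $n$ (uniform boundedness of $[f_n+g_n]_{a_n}$) is the one place where all three hypotheses of Theorem \ref{mainthm} are simultaneously used.
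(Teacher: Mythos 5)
Your proposal is correct and takes essentially the same route as the paper: the same pointwise-in-$\xi$ convergence from Lemma \ref{u_of_a_is_w_continuous_pointwise}, the same dominating envelope obtained by combining Lemmas \ref{u_falls_faster} and \ref{u_is_bounded} (the paper writes your two-regime bound $\min\{M,\,C|\xi|^{-3/2}\}$ as the single estimate $\sup_{x}|k_n(\xi,x)|\leq C|1+i\xi|^{-3/2}$), and the same dominated-convergence argument applied after bounding the $\sup_x$ of the weighted norm by the integral of the $\sup_x$ of the integrand.
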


\begin{proof}
	
	Clearly, 
	\begin{equation}\label{u_of_a_is_continuous_2}
		\begin{split}
\sup_{x\in {\bb T}} \Big\{\int |k_n(\xi, x) - k(\xi, x)|^2 \;|1+&i\xi|^{2(1-\eps)} d\xi \Big\}     \\
&\leq \int \sup_{x\in {\bb T}} \left\{ |k_n(\xi, x) - k(\xi, x)|^2\;|1+i\xi|^{2(1-\eps)}\right \} d\xi \\
		\end{split}		
	\end{equation}
	and we intend to prove that the last integral goes to $0$ as $n \to \infty$.

	Since the functions $f_n$ fulfil the conditions of lemmas \ref{u_falls_faster} and \ref{u_is_bounded},
	\begin{equation} 
		\label{u_of_a_is_continuous_1}
		\sup_{x\in {\bb T}} |k_n(\xi, x)| \leq C |1+i\xi|^{-\frac 32}\;,
	\end{equation}
	where $C$ is independent of $n$ and $\xi$.
	For each fixed $\xi \neq 0$, Lemma \ref{u_of_a_is_w_continuous_pointwise} implies that 
	\[
		\sup_{x\in {\bb T}} \left\{ |k_n(\xi, x) - k(\xi, x)|^2\right \} \to 0\,,
	\]
	as $n\to\infty$. Then using the bound \eqref{u_of_a_is_continuous_1} we find that the integrand in \eqref{u_of_a_is_continuous_2} is bounded by
	\[
		C |1+i\xi|^{-3+2(1-\eps)} \leq C |1+i\xi|^{-1-2\eps}.
	\]
	Finally, we  apply Dominated Convergence to conclude that
	\[
		\int \sup_{x\in {\bb T}} \left\{ |k_n(\xi, x) - k(\xi, x)|^2|1+i\xi|^{2(1-\eps)}\right \} d\xi \to 0\,,
	\]
	as $n \to \infty$.
\end{proof}

\begin{proposition}
\label{v_of_a_is_continuous}
Following the notation of Lemma \ref{u_of_a_is_w_continuous}, denote $v_n(t,x) = \mc F^{-1}(u_n)(t,x)$, where $\mc F$ is the Fourier transform with respect to $\xi$,  and analogous notation for  $v(t,x)$. Then, the convergence $v_n \to v$ holds in $C(\bb T,C^{\frac 12-\eps}(\R_+) )$.
\end{proposition}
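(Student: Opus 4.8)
The plan is to derive Proposition \ref{v_of_a_is_continuous} from Lemma \ref{u_of_a_is_w_continuous} together with the classical Sobolev-type embedding stating that the inverse Fourier transform carries the weighted space $L^2_{1-\eps}(\R)$ continuously into the H\"older space $C^{\frac 12-\eps}(\R)$. Throughout I would assume $0<\eps<\frac12$, since for larger $\eps$ the assertion is weaker (convergence in a finer H\"older norm implies convergence in a coarser one, and $C^{\frac12-\eps}$ is read as $C^0$ when $\frac12-\eps\le 0$).

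First I would unwind the decomposition $u_n=k_n+\frac{f_n}{1+i\xi}$. Since $\int_0^\infty e^{-t}e^{-i\xi t}\,dt=(1+i\xi)^{-1}$ and $f_n$ does not depend on $\xi$, taking the inverse Fourier transform in $\xi$ gives, for $t\in\R_+$,
\[
	v_n(t,x)=\mc F^{-1}(k_n)(t,x)+f_n(x)e^{-t},\qquad v(t,x)=\mc F^{-1}(k)(t,x)+f(x)e^{-t}
\]
(the Heaviside factor coming from $\mc F^{-1}\big((1+i\xi)^{-1}\big)$ reduces to $1$ on $\R_+$, so the second summand is smooth in $t$ there). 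The map $f_n\mapsto f_n(x)e^{-t}$ is continuous from $\HHp$ into $C(\bb T,C^{\frac12-\eps}(\R_+))$, because $\HHp$ embeds into $C(\bb T)$ and $e^{-t}$ is bounded and Lipschitz on $\R_+$; hence $f_n\to f$ in $\HHp$ already disposes of this summand, and everything reduces to proving $\mc F^{-1}(k_n)\to\mc F^{-1}(k)$ in $C(\bb T,C^{\frac12-\eps}(\R_+))$.

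The technical heart is the estimate that there is $C=C(\eps)$ with $\|\mc F^{-1}(g)\|_{C^{\frac12-\eps}(\R)}\le C\,\|g\|_{L^2_{1-\eps}}$ for every $g\in L^2_{1-\eps}(\R)$. The sup bound is immediate from Cauchy--Schwarz, $\int|g(\xi)|\,d\xi\le\|g\|_{L^2_{1-\eps}}\big(\int|1+i\xi|^{-2(1-\eps)}d\xi\big)^{1/2}$, the last integral converging precisely because $2(1-\eps)>1$. For the H\"older seminorm I would, given $t,t'$ with $\delta=|t-t'|$, use $|e^{i\xi t}-e^{i\xi t'}|\le\min\{|\xi|\delta,2\}$, split $\int|g(\xi)|\,|e^{i\xi t}-e^{i\xi t'}|\,d\xi$ at $|\xi|=1/\delta$, and apply Cauchy--Schwarz to each piece; the elementary bounds $\int_{|\xi|\le1/\delta}|\xi|^2|1+i\xi|^{-2(1-\eps)}d\xi\le C\delta^{-1-2\eps}$ and $\int_{|\xi|>1/\delta}|1+i\xi|^{-2(1-\eps)}d\xi\le C\delta^{1-2\eps}$ make both pieces bounded by $C\delta^{\frac12-\eps}\|g\|_{L^2_{1-\eps}}$; alternatively one simply invokes this embedding from \cite{PL}. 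This is the step I expect to need the most care — not because it is deep, but because it must be applied \emph{uniformly in} $x\in\bb T$, which is exactly why Lemma \ref{u_of_a_is_w_continuous} was formulated in the norm of $C(\bb T,L^2_{1-\eps}(\R))$; note also that $k_n(\cdot,x),k(\cdot,x)$ indeed lie in $L^2_{1-\eps}(\R)$ by the bound \eqref{u_of_a_is_continuous_1}, so the estimate applies.

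Finally I would apply the estimate to $g=k_n(\cdot,x)-k(\cdot,x)$ and take the supremum over $x\in\bb T$:
\[
	\sup_{x\in\bb T}\big\|\mc F^{-1}(k_n)(\cdot,x)-\mc F^{-1}(k)(\cdot,x)\big\|_{C^{\frac12-\eps}(\R_+)}\le C\sup_{x\in\bb T}\|k_n(\cdot,x)-k(\cdot,x)\|_{L^2_{1-\eps}}\to 0
\]
by Lemma \ref{u_of_a_is_w_continuous}. The same estimate applied to $k$ and to each $k_n$ separately, combined with the continuity in $x$ of $k_n(\cdot,x)$ and $k(\cdot,x)$ as elements of $L^2_{1-\eps}(\R)$ (which follows from the pointwise-in-$\xi$ continuity of Lemma \ref{u_of_a_is_w_continuous_pointwise}, the uniform bound \eqref{u_of_a_is_continuous_1}, and dominated convergence), shows that $\mc F^{-1}(k_n)$ and $\mc F^{-1}(k)$ belong to $C(\bb T,C^{\frac12-\eps}(\R_+))$. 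Putting this together with the reduction of the second paragraph yields $v_n\to v$ in $C(\bb T,C^{\frac12-\eps}(\R_+))$, as claimed.
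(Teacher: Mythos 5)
Your proposal is correct and follows essentially the same route as the paper: decompose $u_n=k_n+\frac{f_n}{1+i\xi}$, handle the term $f_n(x)H(t)e^{-t}$ via uniform convergence of $f_n$, and push the convergence $k_n\to k$ of Lemma \ref{u_of_a_is_w_continuous} through the continuous embedding $\mc F^{-1}:L^2_{1-\eps}(\R)\to C^{\frac12-\eps}(\R)$, applied uniformly in $x$. The only difference is that you supply an explicit dyadic-split proof of that embedding where the paper simply cites \cite{PL}; both are fine.
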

\begin{proof}
	The operator $\mc F^{-1}:L^2_{1 - \eps}(\R) \to C^{\frac 12-\eps}(\R)$ is continuous \cite[Thm 3.2, pag. 47]{PL} so clearly it extends continuously to an operator $\mc F^{-1}:C(\bb T,L^2_{1-\eps}(\R)) \to C(\bb T,C^{\frac 12-\eps}(\R))$.
In consequence,   $\mc F^{-1}k_n \to \mc F^{-1}k$ in $C(\bb T,C^{\frac 12-\eps}(\R))$, as $n\to\infty$.

On the other hand, $u_n(t,x) = k_n(t,x) + \frac{f_n(x)}{1+i\xi}$ and $\mc F^{-1}\left(\frac{f_n(x)}{1+i\xi}\right) = f_n(x) H(t)e^{-t}$, where $H(t)$ is the Heaveside step function, which is not in $C^{\frac 12-\eps}(\R)$ but it is in $C^{\frac 12-\eps}(\R_+)$.

Lastly, since $f_n \to f$ uniformly, it follows that
$$f_n(x) H(t)e^{-t}\stackrel{n\to\infty}{\longrightarrow}f(x) H(t)e^{-t} $$
 in $C(\bb T,C^{\frac 12-\eps}(\R_+))$, and then
$v_n \to v$ in $C(\bb T,C^{\frac 12-\eps}(\R_+) )$, as desired.
\end{proof}

It remains only to show that  $v$ is indeed a solution of the corresponding equation.
\begin{proposition}
\label{v_is_the_solution}
	The function $v(t,x)$ from Proposition \ref{v_of_a_is_continuous} solves the equation \eqref{a_equation} in the weak sense of Definition \ref{weak_a_PDE}.
\end{proposition}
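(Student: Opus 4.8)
The plan is to recover the weak formulation of Definition~\ref{weak_a_PDE} from the one-parameter family of ``spatial'' weak identities satisfied by $u(\xi,\cdot)$, by applying the inverse Fourier transform in $\xi$; the only delicate point is the contribution of the jump of $v$ at time $t=0$, which is exactly what the splitting $u=k+\frac{f}{1+i\xi}$ was designed to isolate.

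First I would fix $\phi\in\HHp$ and write down the spatial weak form of \eqref{xi_ODE}. Since $u(\xi,\cdot)\in\WWp$ solves $-u''+i\xi a u=af$ (Lemma~\ref{T_inf_bounded}), integrating by parts twice against $\phi$, with all boundary terms vanishing by periodicity, and then substituting $u''=i\xi a u-af$, yields
\begin{equation}
-\,\langle u(\xi,\cdot),\phi''\rangle_{L^2}\;+\;i\xi\,\langle u(\xi,\cdot),\phi\rangle_a\;=\;\langle f,\phi\rangle_a,\qquad \xi\neq 0.\tag{$\ast$}
\end{equation}
Next I would introduce $P(t)=\langle v(t,\cdot),\phi\rangle_a$ and $Q(t)=\langle v(t,\cdot),\phi''\rangle_{L^2}$, where $v=\mc F^{-1}(u)$ and $v(t,\cdot):=0$ for $t<0$. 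A preliminary remark is that this convention is consistent with $v=\mc F^{-1}(u)$, i.e.\ that $v$ genuinely vanishes for $t<0$: the resolvent $\xi\mapsto T_{\xi,a}^{-1}$ extends to a bounded holomorphic $\WWp$-valued map on the lower half-plane $\{\Img\xi<0\}$ — the coercivity estimate \eqref{eq5.3} holds with $i\xi$ replaced by any $z$ with $\Re z>0$ — so $u(\cdot,x)\in L^2(\R)$ is, for each $x$, a boundary value of a Hardy-class function and $\mc F^{-1}(u)(t,x)=0=\mc F^{-1}(k)(t,x)$ for $t<0$ by Paley--Wiener. Granting this, $P$ and $Q$ are bounded, vanish on $(-\infty,0)$, and are continuous on $[0,\infty)$ (by Proposition~\ref{v_of_a_is_continuous}, joint continuity and boundedness of $v$ on $\R_+\times\bb T$, and dominated convergence, since $a\in\Lp$). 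Moreover their Fourier transforms in $t$ are $\widehat P(\xi)=\langle u(\xi,\cdot),\phi\rangle_a$ and $\widehat Q(\xi)=\langle u(\xi,\cdot),\phi''\rangle_{L^2}$ (in the same Fourier convention as above, for which $\mc F^{-1}\big(\tfrac{1}{1+i\xi}\big)=H(t)e^{-t}$); this interchange of $\mc F^{-1}$ with the pairings against $a\,dx$ and $dx$ is legitimate because $\sup_x|k(\xi,x)|\le C|1+i\xi|^{-3/2}$ (Lemma~\ref{u_falls_faster}), which makes $\mc F^{-1}(k)$ a bona fide continuous function and lets Fubini apply to the $k$-part, while the $\frac{f}{1+i\xi}$-part is handled via its explicit inverse transform $f(x)H(t)e^{-t}$.

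With these identifications $(\ast)$ becomes $i\xi\,\widehat P(\xi)=\widehat Q(\xi)+\langle f,\phi\rangle_a$ for a.e.\ $\xi$. Since $i\xi\widehat P=\widehat{P'}$ and the constant $\langle f,\phi\rangle_a$ is the Fourier transform of $\langle f,\phi\rangle_a\,\delta_0$, this reads $P'=Q+\langle f,\phi\rangle_a\,\delta_0$ in the sense of distributions on $\R$. On $(0,\infty)$ the Dirac mass is absent and $Q$ is continuous, so $P\in C^1(0,\infty)$ with $P'=Q$ there, whence $P(T)-P(0^+)=\int_0^T Q(t)\,dt$ for every $T>0$; matching the masses at the origin gives $P(0^+)-P(0^-)=\langle f,\phi\rangle_a$, and $P(0^-)=0$ by causality, so $P(0^+)=\langle f,\phi\rangle_a$. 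Combining these,
\[
\langle v(T,\cdot),\phi\rangle_a-\langle f,\phi\rangle_a=\int_0^T\langle v(t,\cdot),\phi''\rangle_{L^2}\,dt\qquad\text{for all }\phi\in\HHp,\ T>0,
\]
which is exactly Definition~\ref{weak_a_PDE}; one would also note that $v$ is real-valued, since $u(-\xi,\cdot)=\overline{u(\xi,\cdot)}$ because $a$ and $f$ are real.

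The main obstacle is the bookkeeping at $t=0$: establishing that $v$ vanishes for negative times and that the $\xi$-independent term $\langle f,\phi\rangle_a$ in $(\ast)$ reappears, after anti-transforming, as precisely the jump of $P$ across the origin. This is the raison d'\^etre of the decomposition $u=k+\frac{f}{1+i\xi}$ from \eqref{u_falls_faster_1}: it peels off the slowly decaying, discontinuous-at-$0$ part $\frac{f}{1+i\xi}$, whose inverse transform $f(x)H(t)e^{-t}$ carries the discontinuity of $v$, leaving a remainder $k$ that decays like $|1+i\xi|^{-3/2}$ and causes no trouble. A subsidiary technical issue is to carry out the Fourier interchanges in the correct function space; the decay estimate of Lemma~\ref{u_falls_faster} is exactly what is needed, and one could equivalently phrase the whole argument as the evaluation of the elementary $\xi$-integrals obtained after inserting $(\ast)$ and $\int_0^T e^{it\xi}\,dt=\frac{e^{iT\xi}-1}{i\xi}$, the only nonroutine one being the principal-value integral $\tfrac1{2\pi}\int_\R\frac{e^{iT\xi}-1}{i\xi}\,d\xi=\tfrac12$ for $T>0$.
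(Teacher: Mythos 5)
Your proposal is correct and follows essentially the same route as the paper: transform the spatial weak form of \eqref{xi_ODE} back in time, identify $\widehat{P}$ and $\widehat{Q}$ with the pairings of $u(\xi,\cdot)$ against $\phi$ and $\phi''$, and read off that $Q$ is the weak derivative of $P$ with the constant $\langle f,\phi\rangle_a$ supplying the initial value. The only difference is that you spell out details the paper leaves implicit (the Paley--Wiener argument for $v\equiv 0$ on $t<0$ and the explicit $\delta_0$ bookkeeping at the origin), which is a welcome tightening rather than a different method.
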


\begin{proof}
	We take the Fourier transform of $v$ with respect to time, which is the function $u(\xi, x)$ from the previous lemmas.
	We know that $\F v(\xi, x)$ solves the equation
	\[(\F v)_{xx}(\xi, x) = a(x) (i \xi \F v(\xi, x) - f(x))\;.\]
	Since we do not know \emph{a priori} if $v_{xx}$ exists, we multiply the equation above by a test function $\phi \in \CCp$ and then integrate, obtaining
	\[\int \F v(\xi, x) \phi''(x) dx = \int a(x) \phi(x) (i \xi \F v(\xi, x) - f(x)) dx\;.\]

	By Fubini Theorem,
	\[\F \left(\int v(t, x) \phi''(x) dx\right)(\xi) = i \xi \F \left( \int a(x) v(t,x) \phi(x)  dx \right)(\xi) - \int a(x) f(x) \phi(x) dx\;,\]
	which   can be written in the form
	\[\F (U)(\xi) = i \xi \F (V)(\xi) - K\,,\]
	where $U(t) = \langle v(t,.), \phi'' \rangle_{L^2}$ and $V(t) = \langle v(t,.), \phi \rangle_a$ are continuous functions, and $K = \langle  f, \phi \rangle_a$ is a constant.
	The identity above implies that $U(t)$ is the weak derivative of $V(t)$ and that $V(0) = K$.
	Finally, we infer that  $V(T) - K = \int_0^T U(t) dt$ and the statement follows.

	



\end{proof}

\begin{proof}[Proof of Theorem \ref{mainthm}]
Let $v(t,x)$ be the function defined in Proposition \ref{v_of_a_is_continuous}, which by Proposition \ref{v_is_the_solution} is the weak solution of equation \eqref{a_equation}.
Defining $\rho(t,y) = v(t,W(y))$ and recalling Proposition \ref{equations_are_equivalent}, we conclude that $\rho$ is a weak solution of \eqref{W_PDE} as in Definition \ref{weak_W_PDE}.
Finally, Proposition \ref{v_of_a_is_continuous} gives the desired convergence, concluding the proof.
\end{proof}

\section*{Acknowledgements}

\appendix
\section{Auxiliary results}
Let $w$ be the generalized inverse of $W$ defined by
\begin{equation*}
w(s):=\sup\{r\,;\,W(r)\leq s\}\;.
\end{equation*}
which is an right inverse of $W$, or else $w( W(x))=x$. If $w(x)$ is a continuity point of $W$, it holds also that
$W(w (x))=x$.

For a detailed account of properties of the generalized inverse we refer to \cite{EH} and the book \cite{R}.

\begin{proposition}
(Changing of variables).\\
For any measurable bounded function $h:[0,1] \to \bb R$,
\begin{equation}
	\label{change_of_variables}
	\int_{(0,t]}h(y)\,dW(y)\;=\;\int_{(W(0),W(t)]}h(w(x))\,dx
\end{equation}
and
\begin{equation}
	\label{change_of_variables_2}
	\int a(x) h(x) dx = \int h(W(y)) dy\;.
\end{equation}
\end{proposition}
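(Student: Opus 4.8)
The plan is to prove both identities by recognizing the two sides as integrals against a measure and its image (pushforward), after which each formula follows from the standard change-of-variables rule $\int h\,d(\Phi_*\lambda)=\int(h\circ\Phi)\,d\lambda$, valid for every bounded measurable $h$.

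For \eqref{change_of_variables} I would show that the pushforward of Lebesgue measure on $(W(0),W(t)]$ under the continuous nondecreasing map $w$ is exactly the Lebesgue--Stieltjes measure $dW$ on $(0,t]$ (in particular $w$ maps $(W(0),W(t)]$ into $(0,t]$, which is immediate from the equivalence below). The key step is the elementary equivalence
\[
	w(x)\le b \iff x\le W(b)\qquad(x\in\R,\ 0\le b\le t),
\]
where the ``$\Leftarrow$'' direction uses the strict monotonicity of $W$ (if $r>b$ then $W(r)>W(b)\ge x$, so $r\notin\{r:W(r)\le x\}$, whence $w(x)\le b$) and the ``$\Rightarrow$'' direction uses the right-continuity of $W$ at $b$ (if $x>W(b)=W(b^+)$ then $W(b+\delta)\le x$ for some small $\delta>0$, so $w(x)\ge b+\delta>b$). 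Applying this at $a$ and at $b$ gives $\{x\in(W(0),W(t)]:a<w(x)\le b\}=(W(a),W(b)]$, so the pushforward measure assigns mass $W(b)-W(a)=\int_{(a,b]}dW$ to $(a,b]$. Since half-open intervals form a $\pi$-system generating the Borel $\sigma$-algebra and both measures are finite with the same total mass $W(t)-W(0)$, they coincide on all Borel subsets of $(0,t]$; \eqref{change_of_variables} is then the change-of-variables rule for this pushforward applied to $h$.

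For \eqref{change_of_variables_2}, all integrals being over $[0,1]$ with $W(0)=0$, $W(1)=1$, I would argue symmetrically, pushing Lebesgue measure forward by $W$ itself. For $x\in[0,1]$, strict monotonicity makes $\{y\in[0,1]:W(y)\le x\}$ an interval with left endpoint $0$ and supremum $w(x)$ (whether or not it contains its right endpoint is immaterial), so it has Lebesgue measure $w(x)$; hence $\nu:=W_*(\mc L|_{[0,1]})$ has distribution function $x\mapsto w(x)$. Since $W\in\mc W$, the inverse $w$ is absolutely continuous with $w'=a$ (see \ref{w_is_abs_cont}), so $\nu=a(x)\,dx$, and the change-of-variables rule for $\nu$ gives $\int_0^1 h(W(y))\,dy=\int_{[0,1]}h\,d\nu=\int_0^1 h(x)\,a(x)\,dx$, which is \eqref{change_of_variables_2}.

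The only delicate point will be the careful handling of the interval endpoints in the two level-set identities, which is precisely where right-continuity of $W$ enters; everything else is the routine passage (a $\pi$-system argument for the measures, dominated convergence for the integrals) from indicators of intervals to bounded measurable functions. I should also note that in \eqref{change_of_variables} it is essential to push Lebesgue measure forward by the \emph{continuous} map $w$ rather than to push $dW$ forward by $W$: since $W$ may have jumps, its range can fail to be dense, so the naive identity $W_*(dW|_{(0,t]})=\mc L|_{(W(0),W(t)]}$ is false.
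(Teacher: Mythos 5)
Your argument is correct, but it is worth noting that the paper does not actually prove this proposition: the text immediately following the statement simply refers the reader to Propositions 1 and 2 of Falkner--Teschl \cite{FT}. So you are supplying a proof where the paper supplies a citation. Your route --- identify each side as an integral against an image measure, verify the two candidate measures agree on half-open intervals via the level-set identity $\{w\le b\}=\{x\le W(b)\}$, and conclude by a $\pi$-system argument plus the abstract change-of-variables formula --- is the standard one, and is essentially the same mechanism as in the cited reference. The two places where something could go wrong are handled correctly: the equivalence $w(x)\le b\iff x\le W(b)$ genuinely needs both the strict monotonicity of $W$ (which holds because $\mc L\ll\mu$, as the paper notes after Definition \ref{defW}) for one implication and right-continuity for the other, and you invoke each in the right place; and for \eqref{change_of_variables_2} you correctly route through Proposition \ref{w_is_abs_cont} to identify $W_*(\mc L|_{[0,1]})$ with $a(x)\,dx$, which is exactly where the hypothesis $W\in\mc W$ is used (the identity $\nu([0,x])=w(x)$ alone would only give $d\nu=dw$ as a Stieltjes measure). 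Your closing remark that one must push Lebesgue measure forward by $w$, rather than push $dW$ forward by $W$, is also accurate: when $W$ has jumps its range omits an interval of positive Lebesgue measure, so $W_*(dW)$ cannot equal Lebesgue measure there. In short: correct, self-contained, and consistent with (indeed a substitute for) the outsourced proof.
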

For a proof we refer the reader to  \cite[Prop. 1, Prop. 2, page 3]{FT}.

\begin{proposition}
	\label{pullback_measure}
	Let $\mu$ be the measure associated to $W$. Then for any borel set $A \subseteq [0,1]$, $\mu(A) = \mc L(w^{-1}(A))$.
\end{proposition}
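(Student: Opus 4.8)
The plan is to read the identity off the change of variables formula \eqref{change_of_variables}. Given a Borel set $A\subseteq[0,1]$, its indicator $\mathbf 1_A$ is a bounded measurable function, so \eqref{change_of_variables} applies with $h=\mathbf 1_A$ and $t=1$. Since $W(0)=0$ and $W(1)=1$, the right-hand side becomes $\int_{(0,1]}\mathbf 1_A(w(x))\,dx$, and because $\mathbf 1_A(w(x))=\mathbf 1_{w^{-1}(A)}(x)$ this equals $\mc L\big(w^{-1}(A)\cap(0,1]\big)=\mc L\big(w^{-1}(A)\big)$, the point $x=0$ being Lebesgue-negligible. The left-hand side $\int_{(0,1]}\mathbf 1_A(y)\,dW(y)$ is, by the very definition of the measure $\mu$ attached to $W$ (which assigns $\mu((a,b])=W(b)-W(a)$), precisely $\mu(A)$, because $(0,1]$ is a fundamental domain representing the whole torus $\bb T$. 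Equating the two sides yields $\mu(A)=\mc L(w^{-1}(A))$.

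Should one prefer an argument not invoking \eqref{change_of_variables}, the statement can be proved from scratch by a $\pi$-system argument. Set $\nu(A):=\mc L(w^{-1}(A))$; since $w:[0,1]\to[0,1]$ one has $\nu(\bb T)=\mc L([0,1])=1$, so $\mu$ and $\nu$ are both Borel probability measures on $\bb T$, and it suffices to check that they agree on the half-open intervals $(a,b]$, which form a $\pi$-system generating the Borel $\sigma$-algebra. For this one first establishes the Galois-type relation $w(s)\le b\iff s\le W(b)$: it follows from $w(s)=\sup\{r:W(r)\le s\}$ together with the right-continuity of $W$ (so that $\inf_{y>b}W(y)=W(b)$) and its strict monotonicity (so that this infimum is not attained). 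Granting it, $w^{-1}((a,b])=\{s:w(s)\le b\}\setminus\{s:w(s)\le a\}=(W(a),W(b)]$, hence $\nu((a,b])=W(b)-W(a)=\mu((a,b])$, and the two measures coincide.

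Neither route is genuinely hard; the only points demanding a little care are the bookkeeping of endpoints when passing between $[0,1]$ and the torus $\bb T$ (in particular handling a possible atom of $\mu$ at $0\equiv 1$), and — in the second argument — the verification of $w(s)\le b\iff s\le W(b)$, which is precisely where the hypotheses that $W$ is right-continuous and strictly increasing enter. I expect that last identity to be the main (and only) obstacle.
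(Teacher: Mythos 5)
Your proposal is correct, but note that the paper does not actually prove this proposition: it is stated as a known fact with references to \cite{R} and \cite{EH}. Both of your routes work. The second one (set $\nu(A)=\mc L(w^{-1}(A))$, check that $\mu$ and $\nu$ are probability measures agreeing on the $\pi$-system of half-open intervals, via the equivalence $w(s)\le b\iff s\le W(b)$) is essentially the standard textbook argument one would find in the cited references, and you correctly identify where right-continuity and strict monotonicity of $W$ enter; your verification of that equivalence is sound. The first route, reading the identity off \eqref{change_of_variables} with $h=\mathbf 1_A$, is formally valid given that the paper imports the substitution rule as a black box from \cite{FT}, but it is slightly backwards from a logical standpoint: the usual proof of \eqref{change_of_variables} proceeds by first establishing the measure identity $\mu=w_{*}\mc L$ on indicators (i.e., the very proposition at hand) and then extending by linearity and monotone convergence. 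So if one wanted a self-contained appendix, your second argument is the one to keep; your attention to the endpoint bookkeeping on $\bb T$ (the possible atom at $0\equiv 1$) is the right level of care and does not hide any real difficulty.
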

This is a known result. See, for example, \cite{R} or \cite{EH}.
	
\begin{proposition}
	\label{w_is_abs_cont}
	If $\mc L \ll \mu$, then $w$ is an absolutely continuous function.
\end{proposition}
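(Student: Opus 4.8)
The plan is to show that the generalized inverse $w$ is absolutely continuous by exhibiting it as the primitive of an $L^1$ density. Recall that $w$ is nondecreasing and, since $W \in \mc W$ means $W(x+1)-W(x)=1$ and the Lebesgue measure is absolutely continuous with respect to $\mu$, the function $W$ is strictly increasing; hence $w$ is continuous. A nondecreasing continuous function is absolutely continuous precisely when it maps Lebesgue-null sets to Lebesgue-null sets (the Banach–Zarecki criterion), so the core of the argument reduces to that null-set property.

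First I would invoke Proposition \ref{pullback_measure}, which says that for every Borel set $A \subseteq [0,1]$ one has $\mu(A) = \mc L(w^{-1}(A))$. Now take a Lebesgue-null set $N \subseteq [0,1]$; I want to bound $\mc L(w(N))$. The key observation is that $w(N)$, being the image of a null set under the monotone continuous map $w$, is contained (up to a countable set accounting for the at most countably many jump points of $W$, equivalently the intervals on which $w$ is constant) in $w^{-1}(N')$ for a suitable Borel set, and more directly: since $\mc L \ll \mu$, the hypothesis gives a control of $\mc L$ by $\mu$, and by Proposition \ref{pullback_measure}, $\mu = w_*\mc L$ (the pushforward of Lebesgue measure under $w$). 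Therefore for any Borel $B$, $\mc L(B) \leq \mu$-absolute-continuity $\Rightarrow$ there is no mass of $\mc L$ outside where $\mu$ lives; concretely, writing $\mc L = \varphi\, d\mu$ with $\varphi \in L^1(\mu)$ (Radon–Nikodym), one gets $\mc L(w(N)) = \int_{w(N)} \varphi \, d\mu = \int_{w(N)} \varphi \, d(w_*\mc L) = \int_{w^{-1}(w(N))} \varphi(w(s))\, ds$. Since $N \subseteq w^{-1}(w(N))$ only up to the constancy intervals of $w$, and those intervals are exactly the (at most countably many) jump intervals of $W$, on the complement $w$ is injective so $w^{-1}(w(N)) \setminus (\text{jump intervals})$ is again null; the integral over a null set vanishes, giving $\mc L(w(N)) = 0$.

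Then I would assemble: $w$ is continuous and nondecreasing and sends null sets to null sets, so by Banach–Zarecki it is absolutely continuous, and consequently $w(s) = w(0) + \int_0^s a(r)\,dr$ for $a = w' \in L^1([0,1])$, $a \geq 0$. The periodicity extension of $W$ transfers to $w$ in the obvious way so this holds on all of $\bb T$.

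The main obstacle is the bookkeeping around the constancy intervals of $w$ (equivalently the jumps of $W$): one must be careful that $w^{-1}(w(N))$ differs from $N$ only by a countable union of intervals, and that on each such interval the density $\varphi(w(\cdot))$ is integrated against Lebesgue measure over a set that is either contained in $N$ (hence null) or is one of those intervals — but an interval $[r,s]$ on which $w$ is constant corresponds to a single point $x_0 = w(r) = w(s)$ which is an atom of $\mu$, and since $\mc L \ll \mu$ forces nothing problematic here (atoms are allowed), one just checks that such a point does not contribute to $\mc L(w(N))$ unless $x_0 \in w(N)$, in which case $N$ meets $[r,s]$ in positive measure, contradicting $\mc L(N)=0$. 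Making this dichotomy airtight is the only delicate point; everything else is the standard Banach–Zarecki / Radon–Nikodym machinery together with Proposition \ref{pullback_measure}.
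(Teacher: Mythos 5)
Your overall strategy is the same as the paper's: reduce absolute continuity of the continuous nondecreasing function $w$ to the Lusin property (mapping Lebesgue-null sets to Lebesgue-null sets), and verify that property by combining Proposition \ref{pullback_measure} with the hypothesis $\mc L \ll \mu$. However, the step you yourself single out as ``the only delicate point'' is handled incorrectly. You claim that if $[r,s]$ is a constancy interval of $w$ with $w \equiv x_0$ there, then $x_0 \in w(N)$ would force $N$ to meet $[r,s]$ in \emph{positive} measure, contradicting $\mc L(N)=0$. This is false: $x_0 \in w(N)$ only requires that $N$ contain at least one point of $[r,s]$ (take $N=\{r\}$, a null set, and $x_0 = w(r)$ lands in $w(N)$). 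So the dichotomy cannot be made airtight as stated, and your decomposition of $\int_{w^{-1}(w(N))}\varphi(w(s))\,ds$ is left with an unestimated contribution from the jump intervals.

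The gap is easily repaired, and the repair is exactly what the paper does. The points $x_0$ in question are the atoms of $\mu$, i.e.\ the discontinuity points $\bb D(W)$ of $W$, of which there are at most countably many; hence $\mc L(w(N)) = \mc L(w(N)\setminus \bb D(W))$, and one should simply discard $\bb D(W)$ from the \emph{image} set at the outset rather than discard the jump intervals from the preimage. For the remaining set one has $w^{-1}\bigl(w(N)\setminus \bb D(W)\bigr) \subseteq N$ (since $w(x)\notin\bb D(W)$ gives $W(w(x))=x$, so $w$ is injective there), whence $\mu\bigl(w(N)\setminus\bb D(W)\bigr)=\mc L\bigl(w^{-1}(w(N)\setminus\bb D(W))\bigr)\le\mc L(N)=0$ and $\mc L \ll \mu$ finishes the argument. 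Alternatively, within your Radon--Nikodym formulation, the correct observation is that $\varphi(x_0)=0$ at every atom, because $0=\mc L(\{x_0\})=\varphi(x_0)\,\mu(\{x_0\})$ with $\mu(\{x_0\})>0$; either fix closes the gap, but the density $\varphi$ is an unnecessary detour compared with the paper's direct use of Proposition \ref{pullback_measure}.
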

\begin{proof}
	Since $w$ is continuous and non-decreasing, we only need to check it satisfies the Lusin property, namely that $w$ maps sets of measure zero into sets of measure zero.

	Let $E \subset [0,1]$ be a measurable set with $\mc L(E) = 0$.
	Let $\bb D(W)$ denote the set of discontinuity points of $W$.
	Clearly $\mc L(w(E)) = \mc L(w(E) \setminus \bb D(W))$ because $\bb D(W)$ is only denumerable.
		Since $\mc L \ll \mu$, by Proposition \ref{pullback_measure}, we only need to show that $\mc L(w^{-1}(w(E) \setminus \bb D(W))) = 0$.

        We know that $w(x) \not\in \bb D(W)$ implies $W(w(x)) = x$. From this fact, it easily follows  that $w^{-1}(w(E) \setminus \bb D(W)) \subseteq E$.
	Consequently, 
        \[
		\mu(w(E) \setminus \bb D(W) ) = \mc L(w^{-1}(w(E) \setminus \bb D(W))) \leq \mc L(E) = 0\;,
	\]
	as we wanted.
\end{proof}

\begin{remark}
\label{weaker_condition}
It is almost immediate that the converse of Proposition \ref{w_is_abs_cont} is also true.
In \cite{fl}, the condition $\mc L \ll \mu$ is replaced by a weaker one, namely that $\mu(I) > 0$ for every open interval $I \subset \bb T$. 
This is not enough to guarantee that $w$ is an absolutely continuous function. For instance, 
the measure $\mu = \sum_{n = 1}^\infty 2^{-n}\delta_{q_n}$, which has a delta at each rational number $q_n$ is a counterexample because it assigns positive measure to every open interval, but $w$ is a Cantor-like staircase function, thus not an absolutely continuous function.
\end{remark}

\end{document}